\newcommand{\beq}{\begin{equation}}
\newcommand{\eeq}{\end{equation}}
\newcommand{\bea}{\begin{aligned}}
\newcommand{\eea}{\end{aligned}}
\newcommand{\bdm}{\begin{displaymath}}
\newcommand{\edm}{\end{displaymath}}
\newcommand{\barr}{\begin{array}}
\newcommand{\earr}{\end{array}}
\newcommand{\ben}{\begin{enumerate}}
\newcommand{\een}{\end{enumerate}}
\newcommand{\bde}{\begin{description}}
\newcommand{\ede}{\end{description}}
\numberwithin{equation}{section}
\newtheorem{teor}{Theorem}[section]
\newtheorem{prop}[teor]{Proposition}
\newtheorem{lem}[teor]{Lemma}
\newtheorem{Def}[teor]{Definition}
\newtheorem{rem}[teor]{Remark}
\newcommand{\R}{\mathbb{R}}
\newcommand{\N}{\mathbb{N}}
\newcommand{\PP}{\mathbb{P}}
\newcommand{\E}{\mathbb{E}}
\newcommand{\al}{\alpha}
\newcommand{\de}{\delta}
\newcommand{\e}{\epsilon}
\newcommand{\X}{\mathcal{X}}
\newcommand{\M}{\mathcal{M}}
\begin{document}
\title[Trait substitution tree]{Trait substitution trees on two time scales analysis}
\author{Anton Bovier and Shi-Dong Wang}

\address{A. Bovier\\Institut f\"ur Angewandte Mathematik\\Rheinische
   Friedrich-Wilhelms-Uni\-ver\-si\-t\"at Bonn\\Endenicher Allee 60\\ 53115
   Bonn, Germany}
\email{bovier@uni-bonn.de}
\address{S.-D. Wang\\Department of Statistics\\University of Oxford
\\1 South Parks Road
\\ Oxford, OX1 3TG, UK}
\email{shidong.wang@stats.ox.ac.uk}

\subjclass[2000]{92D25, 60J85, 37N25, 92D15, 60J75}
\keywords{continuous mass population, slow migration, rare mutation, trait substitution tree, fitness structure.}

\thanks{A. Bovier is supported by in part by the DFG in SPP 1590 ``Probabilistic Structures in Evolution'' and the 
Hausdorff Center for Mathematics. S.-D. Wang was supported by a 
Hausdorff Scholarship while at the University of Bonn, and EPSRC Grant EP/I01361X/1 while at the University of Oxford.}

\date{\today}

\begin{abstract}

In this paper we consider two continuous-mass population models as analogues of logistic branching random walks, one is supported on a finite trait space and the other one is supported on an infinite trait space. For the first model with nearest-neighbor competition and migration, we justify a well-described evolutionary path to the short-term equilibrium on a slow migration time scale. For the second one with an additional evolutionary mechanism-mutation, a jump process-trait substitution tree model is established under a combination of rare mutation and slow migration limits. The transition rule of the tree highly depends on the relabeled trait sequence determined by the fitness landscape. The novelty of our
model is that each trait, which may nearly die out on the migration time scale,
has a chance to recover and further to be stabilized on the mutation time scale because
of a change in the fitness landscape due to a newly entering mutant.

\end{abstract}

\maketitle

\tableofcontents


\section{Introduction}\label{section 2.1}

In recent years a spatially structured population with migration (namely mutation in \cite{Cha06}) and local regulation proposed by Bolker and Pacala \cite{BP97}, Dieckmann and Law \cite{DL96} (BPDL process) has attracted particular interest both from biologists and mathematicians. It has several advantages over the traditional branching processes, which make it more natural as a population model: the quadratic competition term is used to prevent the population size from escaping to infinity, and  the mutation term is used to create an alternative trait type of the population for selection. Over the last decade, a lot 
of work has been  addressing different aspects on this model. 
For instance, Etheridge \cite{Eth04}, Fournier and M\'el\'eard \cite{FM04}, and Hutzenthaler and Wakolbinger \cite{HW07} study the extinction and survival problems. Champagnat \cite{Cha06}, Champagnat and Lambert \cite{CL07}, Champagnat and M\'el\'eard \cite{CM10}, M\'el\'eard and Tran \cite{MT09}, 
Dawson and Greven \cite{DG10} 
mainly focus on its long time behavior by multi-scale analysis methods.

The present work is largely motivated by the derivation of macroscopic 
phenomena on the level of populations from the individual based models
in the joint limits of large population size and small mutation rates.
We mention in particular the work of Fournier and M\'el\'eard \cite{FM04}, 
Champagnat \cite{Cha06} where under certain conditions convergence
to the so-called ``trait substitution sequence (TSS)'' was obtained. 
More recently, this type of results was extended in Champagnat and M\'el\'eard
\cite{CM10} to include further evolutionary phenomena such as evolutionary 
branching. A common feature of these works is the following setup: 
one assumes that mutations rates are so small that a monomorphic population, after a single mutation event has sufficient time to move to a new 
equilibrium where either the mutant trait gets extinct or the mutant trait 
fixated and the resident trait gets extinct. In this way, one obtains, on
the time scale at which such rare mutations occur, a sequence of populations 
evolving towards increasing fitness, the so-called \emph{trait substitution sequence}. 
In certain singular situations, one may also reach an equilibrium with co-existing traits, leading to the above-mentioned phenomenon of evolutionary branching \cite{CM10}.

What we wish to add to this picture in the present paper is a more complex 
structure of populations. The general idea is to consider populations with
 multiple traits where individual may change (upon birth or otherwise) 
between a \emph{finite} set of traits at a given (population size independent) rate. We will term such switches ``migrations''. In addition, there are
\emph{rare} mutations where an individual can be born with a \emph{new} trait
which has never been existing in the population.

This set-up is motivated from ideas that are currently discussed intensely in 
cancer research. The \emph{migration} events can be interpreted as epigenetic 
switch in the gene-expression of a cell between a variety of possible 
 ``metastable'' state (see e.g. Huang \cite{Huang11} and Gillies et al 
\cite{GVG12}  and references therein). Mutations are then true mutations that
 lead to a change in the epigenetically accessible trait-space. See also H\"olzel et al for a discussion in the
context of cancer evolution \cite{holzel2013}. 
In this paper we consider a very simple caricature of such a complex 
situations. Our purpose here is limited to showing that such models are still 
accessible to the mathematical methods developed in recent years, and that
such systems give rise to new and interesting mathematical structures.

In this paper  we investigate the long term behavior in a two-step limiting 
procedure where we first let the population size tend to infinity, and then let the migration rate tend to zero while rescaling time in an appropriate way
to obtain a non-trivial limit.
For a finite trait space, specific conditions are imposed on the fitness and 
demographic parameters, and a well-described evolutionary path to approach 
the short-term equilibrium will be obtained on an appropriate time scale.
The noteworthy feature here is that these equilibria can be polymorphic. 
We call this process  a \emph{trait substitution tree (TST)} on the finite trait 
space. For any given sequence of traits, the equilibrium configuration is determined by their labeled order according to their fitness landscape.

In a second step, we add random mutations on a longer time scale. 
This is modeled here as the appearance of mass at hitherto unoccupied 
locations in trait space driven by some Poisson process. The effect of the 
appearance of such new mass is a reshuffling of the migration part of the process that ends in a new equilibrium configuration.
As this process continues, we obtain what we call the \emph{trait substitution tree (TST)} process on 
infinite state space. 
The somewhat artificial introduction for mutations in the infinite population 
model is motivated on the basis of a limit of a finite population model with 
migration and mutation rates at distinct time scales. Such a model is studied 
in a companion paper \cite{BW12b}.

The remainder of the paper is organized as follows.
In Section \ref{section 2.2}, we briefly describe the microscopic model and give some preliminary results. In particular, we recall the law of large numbers of the BPDL processes.
In Section \ref{section 2.3}, as $\epsilon$ tends to 0, on a finite trait space we retrieve a well-defined short-term evolution path to its TST configuration on the migration time scale $O\left(\ln\frac{1}{\epsilon}\right)$.
In Section \ref{section 2.4}, under the rare mutation constraint we obtain a jump-type TST process on a longer time scale-the mutation time scale.
In Section \ref{section 2.5}, we provide proofs of the results in Section \ref{section 2.3} and Section \ref{section 2.4}.
Finally, for better understanding the TST process we provide a simulation algorithm in Section
 \ref{section 2.6}.


\section{Microscopic model}\label{section 2.2}
\subsection{Notation and description of the processes}
Following \cite{BP97}, we assume the population at time $t$ is composed of a finite number $I(t)$ of individuals characterized by their phenotypic traits $x_1(t), \cdots, x_{I(t)}(t)$ taking values (which can be equal) in a compact subset $\X$ of $\R^d$.

We denote by $\M_F(\X)$ the set of non-negative finite measures on $\X$.  Let $\M_a(\X)\subset \M_F(\X)$ be the set of atomic measures on $\X$:
\[
\M_a(\X)=\left\{\sum\limits_{i=1}^n\de_{x_i}: x_1,\cdots, x_n\in\X, n\in\N\right\}.
\]
Then the population process can be represented as:
\[
\nu_t=\sum\limits_{i=1}^{I(t)}\delta_{X_i(t)}.
\]
Let $B(\X)$ denote the totality of bounded and measurable functions on $\X$. Let $B(\M_F(\X))$ (and $B(\M_a(\X))$) be totality of bounded and measurable functions on
$\M_F(\X)$ (and $\M_a(\X)$). For $\nu\in\mathcal{M}_F(\X)$ and $\phi\in B(\X)$, denote by $\langle\nu,\phi\rangle=\int\phi d\nu$.

Let's specify the population processes $(\nu_t^n)_{t>0}$ by introducing a sequence of demographic parameters, for n$\in\N$:
\begin{itemize}
  \item $b_n(x)$ is the rate of birth from an individual with trait $x$.
  \item $d_n(x)$ is the rate of death of an individual with trait $x$ because of ``aging''.
  \item $\al_n(x,y)$ is the competition kernel felt by some individual with trait $x$ from another individual with trait $y$.
  \item $D_n(x,dy)$ is the children's dispersion law from its mother with trait $x$. In particular, it can be decomposed into two parts-local birth at location $x$ and a small portion of migration based on birth, i.e.
  \beq\label{dispersal kernel_Cha2}
  D_n(x,dy)=(1-\e)1_{x=y}+\e m_n(x,dy)1_{x\neq y}.
  \eeq
  Here, $m_n(x,dy)$ is the transition law for migration, which satisfies
  \[
  \int_{y\in\X}m_n(x,dy)=1.
  \]
  We will omit the superscript $\epsilon$ in $D_n$ in the sequel when this leads no ambiguity.
\end{itemize}

Fournier and M\'el\'eard \cite{FM04} formulated a pathwise construction of the BPDL process $\{(\nu_t^n)_{t\geq0}; n\in\N\}$ in terms of Poisson random measures and justified its infinitesimal generator defined for any $\Phi\in B(\M_a(\X))$:
\beq\label{generator BPDL_Cha2}
\bea
L_0^n\Phi(\nu)=&\int_{\X}\nu(dx)\int_{\R^d}\left[\Phi(\nu+\de_{y})-\Phi(\nu)\right]b_n(x)D_n(x,dy)\\
             &+\int_{\X}\nu(dx)\left[\Phi(\nu-\de_x)-\Phi(\nu)\right]\left[d_n(x)+\int_{\X}\al_n(x,y)\nu(dy)\right].
\eea
\eeq
The first term is used to model birth events, while the second term which is nonlinear is interpreted as natural death and competing death.

Instead of studying the original BPDL processes defined by \eqref{generator BPDL_Cha2}, our goal is to study the rescaled processes
 \beq\label{BPDL process scaled_Cha2}
X_t^n:=\frac{\nu_t^n}{n}, \qquad t\geq0
 \eeq
since it provides us a macroscopic approximation when we take the large population limits (we will see later, the initial population is proportional to $n$ in some sense).
The infinitesimal generator of the rescaled BPDL process has the following form, for any $\Phi\in B(\M_F(\X))$:
\beq\label{generator BPDL rescaled_Cha2}
\bea
L^n\Phi(\nu)=&\int_{\X}n\nu(dx)\int_{\R^d}\left[\Phi(\nu+\frac{\de_{y}}{n})-\Phi(\nu)\right]b_n(x)D_n(x,dy)\\
             &+\int_{\X}n\nu(dx)\left[\Phi(\nu-\frac{\de_x}{n})-\Phi(\nu)\right]\left[d_n(x)+\int_{\X}\al_n(x,y)n\nu(dy)\right].
\eea
\eeq

\subsection{Preliminary results}
Let's denote by (A) the following assumptions:
\begin{itemize}
\item[(A1)] There exist $b(x),\, d(x), \,\bar m(x)\in B(\X),\, \al(x,y)\in B(\X\times\X)$ with $\bar m(x)$ a probability density for $x, y\in\X, \,n\in\N$, such that
 \[\bea & 0<b_n(x)\equiv b(x),\qquad 0<d_n(x)\equiv d(x), \qquad m_n(x,y)\leq\bar{m}(y),\\
        &0\leq\al_n(x,y)=\frac{\al(x,y)}{n}
   \eea
 \]

\item[(A2)] $ b(x)-d(x)>0$.
 \end{itemize}

 The first assumption implies that there exist constants $\bar{b}, \,\bar{d}, \,\bar{\al}$ such that $b(x)\leq\bar{b}, \,d(x)\leq\bar{d}, \,\al(x,y)\leq\bar{\al}$. Furthermore, it guarantees the existence of the BPDL process (see \cite{FM04}).

 By neglecting the high order moment, Bolker and Pacala \cite{BP97} use the ``moment closure'' procedure to approximate the stochastic population processes. As we can see from the generator formula \eqref{generator BPDL rescaled_Cha2}, due to the quadratic nonlinear term, it should be enough to set the third order moments to be uniformly bounded and ``close'' the equation up to second order moment . Then Fournier and M\'el\'eard \cite{FM04} obtain a deterministic measure-valued process in the large population limit.

\begin{teor}[Fournier and M\'el\'eard \cite{FM04}, convergence to an integro-differential equation]\label{Theorem LLN_Cha2}
Under the assumption (A1), consider a sequence of processes $(X_t^n)_{t\geq 0}$ defined in \eqref{BPDL process scaled_Cha2}. Suppose that $(X_0^n)$ converges in law to some deterministic finite measure $X_0\in\M_F(\X)$ as $n\to\infty$ and satisfies $\sup\limits_{n\geq1}\E\langle X_0^n,1\rangle^3<\infty$.
Then the sequence of processes $(X_t^n)_{t\geq 0}$ converges in law as $n\to\infty$, on $D([0,\infty),\M_F(\X))$, to a deterministic measure-valued process $(X_t)_{t\geq0}\in C([0,\infty),\M_F(\X))$, where $(X_t)_{t\geq0}$ is the unique solution satisfying
\beq\label{LLN limit_initial_Cha2}
\sup\limits_{t\in[0,T]}\langle X_t,1\rangle<\infty,
\eeq
and for any $\phi\in B(\X)$,
\beq\label{LLN limit_Cha2}
\bea
\langle X_t,\phi\rangle =&\langle X_0,\phi\rangle+\int_0^tds\int_{\X}X_s(dx)b(x)\int_{\R^d}\phi(y)D(x,dy)\\
\qquad &-\int_0^tds\int_{\X}X_s(dx)\phi(x)\left[d(x)+\int_{\X}\al(x,y)X_s(dy)\right].
\eea
\eeq
 \end{teor}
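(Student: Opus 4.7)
The plan is to follow the by-now-standard route for law-of-large-numbers limits of individual-based models: write a semimartingale decomposition, propagate moments, prove tightness, identify limit points via the martingale problem, and close with a uniqueness estimate for the limiting integro-differential equation.

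\emph{Step 1 (semimartingale decomposition and moment propagation).} Starting from the Poisson random measure construction of \cite{FM04}, for $\phi\in B(\X)$ the rescaled process admits the decomposition
\[
\langle X_t^n,\phi\rangle = \langle X_0^n,\phi\rangle + A_t^{n,\phi} + M_t^{n,\phi},
\]
where $A_t^{n,\phi}$ equals the right-hand side of \eqref{LLN limit_Cha2} with $X_s$ replaced by $X_s^n$, and $M_t^{n,\phi}$ is a purely discontinuous square-integrable martingale whose jumps are of size $\|\phi\|_\infty/n$. Its predictable bracket is $1/n$ times a functional that is polynomial of degree two in $\langle X_s^n,1\rangle$. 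Using assumption (A1) and the hypothesis on $\E\langle X_0^n,1\rangle^3$, a standard Gronwall argument (bounding $\langle X_t^n,1\rangle$ stochastically by a Yule-type pure birth process of rate $\bar b$) yields
\[
\sup_{n\ge 1}\E\Bigl[\sup_{t\in[0,T]}\langle X_t^n,1\rangle^3\Bigr]<\infty \qquad\text{for every }T>0.
\]

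\emph{Step 2 (tightness).} I would apply the Roelly--Coppoletta criterion: tightness of $(X^n)$ in $D([0,\infty),\M_F(\X))$ reduces to tightness of the real-valued processes $(\langle X^n,\phi\rangle)$ for $\phi$ in a dense subalgebra of $C(\X)$, together with a compact-containment condition. The uniform third-moment bound supplies compact containment, and tightness of the real-valued coordinates follows from Aldous's criterion: combining the bracket estimate with the moment bound gives, uniformly in stopping times $\tau\le T$ and in $n$,
\[
\E\bigl[|\langle X_{\tau+\de}^n-X_\tau^n,\phi\rangle|^2\bigr]\le C(\phi)\,\de,
\]
using boundedness of $b,d,\al,\bar m$.

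\emph{Step 3 (identification and uniqueness).} Extract a subsequence $X^{n_k}\Rightarrow X$. Since $\E[(M_t^{n_k,\phi})^2]=O(1/n_k)$, the martingale part vanishes in the limit; and since jumps of $X^{n_k}$ are of size $1/n_k$, the limit $X$ lives almost surely in $C([0,\infty),\M_F(\X))$. Continuity of the drift functional $A_t^{\cdot,\phi}$ in $X$, guaranteed by boundedness of the parameters and the uniform mass control, permits passage to the limit in the semimartingale decomposition, so any subsequential limit satisfies \eqref{LLN limit_Cha2}. For uniqueness, given two such solutions $X,Y$ with the same initial datum, I would test against $\phi\in B(\X)$ with $\|\phi\|_\infty\le 1$ and take a supremum to obtain a total variation estimate
\[
\|X_t-Y_t\|_{TV}\le C\int_0^t\|X_s-Y_s\|_{TV}\,ds,
\]
using boundedness of $b,d,\al$ and the a priori mass bound; Gronwall then forces $X=Y$, which also promotes the subsequential convergence of Step 3 to convergence of the full sequence.

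The main obstacle I expect is not any single step in isolation but rather the careful bookkeeping behind the Aldous estimate in Step 2: the quadratic competition term $\int\al(x,y)X_s^n(dy)$ forces control of a second-moment object and thus makes essential use of the third-moment propagation bound from Step 1. The singular part $(1-\e)1_{x=y}$ of the dispersal kernel $D_n$ needs a line of verification but contributes no genuine difficulty, since it acts as a bounded operator on $B(\X)$ and enters the drift linearly.
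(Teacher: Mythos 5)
This theorem is imported verbatim from Fournier and M\'el\'eard \cite{FM04}; the paper gives no proof of its own, and your outline (semimartingale decomposition, third-moment propagation by Yule domination, Roelly/Aldous tightness, martingale-problem identification, total-variation Gronwall uniqueness) is precisely the argument of the cited source. The only caveat is cosmetic: the $L^2$ Aldous bound $\E\bigl[|\langle X_{\tau+\de}^n-X_\tau^n,\phi\rangle|^2\bigr]\le C(\phi)\de$ cannot hold for the drift part with only third moments of the mass in hand (squaring the quadratic competition term produces a fourth moment), so one should instead estimate the drift increment in $L^1$ and only the martingale increment in $L^2$, which suffices for Aldous's criterion in probability.
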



\section{TST on a finite trait space: without mutation}\label{section 2.3}
The trait substitution sequence (TSS) model is a powerful tool in understanding various evolutionary phenomena, such as evolutionary branching which may lead to speciation (see Champagnat and M\'el\'eard  \cite{CM10}). 
Moreover, the population follows the ``hill climbing'' process on the increasing fitness landscape, and holds monomorphic trait on a long time scale. This model is proposed by Metz et al. \cite{MGMJ96} (so called ``invasion implies fixation'') and mathematically studied by Champagnat et al. \cite{Cha06, CL07, MT09}.

Notice that the dispersal kernel $D(x,dz)$ implicitly depends on a parameter $\epsilon$ (see \eqref{dispersal kernel_Cha2}).
Rather than taking large population and rare migration limits simultaneously as in \cite{Cha06}, we justify a so-called trait substitution tree (TST) from a macroscopic point of view. More precisely, we first consider the large population limit to attain a macroscopic approximation of the individual-based model (see Theorem \ref{Theorem LLN_Cha2}). Then, we consider the slow migration limit by a rescaling procedure based on the macroscopic limit. In contrast to the model in Champagnat \cite{Cha06}, the migration rate here is not constrained in terms of the demographic parameter (population size).

Here, the so-called TST process arises under the slow migration limit when we assume the nearest-neighbor competition. Note that a variety of short-term evolution paths can be attained by specifying different competition strengths. In other words, the order of invasion and recovery has no special significance even though in this section we restrict the picture by forward invasion into the fitter direction and backward recovery into the unfit direction along the fitness landscape. However, these paths are indistinguishable on a longer scale-the mutation time scale followed by the next section. Nevertheless, apart from the interesting tree structure the TST model also brings us some insights into speciation phenomena - evolution from a monomorphic ancestor to diverse species.

Denote by (C) the following assumptions:
\begin{itemize}

\item[(C1)] Assume $\X=\{x_0,x_1,x_2,\cdots,x_{L}\}$ comprised of distinct traits with index up to $L\in\N$. Monomorphic initial trait: $X_0^n=\frac{N_0^n}{n}\de_{x_0}$, and $\frac{N_0^n}{n}\stackrel{\text{law}}{\to}\bar\xi(x_0)$ as $n\to\infty$.  

\item[(C2)] Nearest-neighbor competition and migration: $\al(x_i,x_j)=m(x_i,x_j)\equiv 0$ for $|i-j|>1$, and
\beq
x_0\prec x_1\prec\cdots\prec x_{L-1}\prec x_L
\eeq
where $x_{i-1}\prec x_i$ means $f_{i,i-1}>0, f_{i-1,i}<0$ for any $1\leq i\leq L$ with fitness function $f_{i,j}:=b(x_i)-d(x_i)-\al(x_i,x_j)\bar{\xi}(x_j)$, and $\bar{\xi}(x_j):=\frac{b(x_j)-d(x_j)}{\al(x_j,x_j)}$.

\item [(C3)] For any $i\geq 2$,
    \beq
    \frac{i}{b(x_i)-d(x_i)}\geq \frac{1}{f_{i,i-1}}+\frac{1}{f_{i-1,i-2}}+\cdots+\frac{1}{f_{1,0}}.
    \eeq

\item[(C4)] For any $i\geq 0$, $\frac{|f_{i,i+1}|}{f_{i+2,i+1}}<1$, and
     \beq
     \frac{|f_{i,i+1}|}{f_{i+2,i+1}(b(x_i)-d(x_i))}-\frac{1}{f_{i+3,i+2}}>\frac{|f_{i+1,i+2}|}{f_{i+3,i+2}(b(x_{i+1})-d(x_{i+1}))}.
     \eeq
\end{itemize}

Notice that (C3-C4) are just technical assumptions for results in this section but not necessary for results in next section. In fact, assumption (C3) guarantees
that the pattern for fixation of fitter traits is in a form of one-by-one replacements until the fittest trait rather than immediate establishments (see proof of Proposition \ref{TSS_3TYPE} and Proposition \ref{TSS_4TYPE}). (C4) implies that the recovery time of trait $x_i$ is later than that of type $x_{i+1}$ (see Lemma \ref{Lem:C4}).

We first consider the macroscopic limit \eqref{LLN limit_Cha2} which involves the parameter $\epsilon>0$, and rewrite it in another form, for any $\phi\in B(\X)$,
\beq
\bea
\langle X_t^{\epsilon},\phi\rangle =&\langle X_0,\phi\rangle+\epsilon\int_0^tds\int_{\X}X_s^{\epsilon}(dx)b(x)\int_{\X}\left[\phi(y)-\phi(x)\right]m(x,dy)\\
\qquad &+\int_0^tds\int_{\X}X_s^{\epsilon}(dx)\phi(x)\left[b(x)-d(x)-\int_{\X}\al(x,y)X_s^{\epsilon}(dy)\right].
\eea
\eeq

Suppose that the process is supported on a finite trait space \[\X=\{x_0,x_1,\cdots,x_{L}\},
\]
and allow only nearest-neighbour  competition and migration. The infinite population size limit then yields  a dynamical system given by 
\beq\label{tss_X sequence}
\bea
\xi_t(x_i)=
&\xi_0(x_i)+\int_0^t\left[b(x_i)-d(x_i)-\sum_{j=i\pm 1,i}\al(x_i,x_j)\xi_s(x_j)\right]\xi_s(x_i)ds\\
&+\epsilon\int_0^t\sum_{j=i\pm 1}\left[b(x_j)\xi_s(x_j)m(x_j,x_i)-b(x_i)\xi_s(x_i)m(x_i,x_j)\right]ds,\qquad 1\leq i\leq L.
\eea
\eeq
Global  existence and uniqueness of the processes follows from Theorem \ref{Theorem LLN_Cha2}. 

 In the following theorem, we derive  a trait substitution tree model based on the above macroscopic approximation by letting $ \epsilon $ tend to  zero while rescaling  time. 
\begin{teor}\label{TST_deterministic}
  Admit assumptions (A) and (C),  consider the deterministic measure-valued processes $(X_t^{\epsilon})_{t\geq0}$ specified by \eqref{tss_X sequence} on the trait space $\X=\{x_0,x_1,x_2,\cdots,x_{L}\}$, for any $L\in\N$.  Then the sequence of rescaled processes $\left(X_{t\cdot \ln\frac{1}{\epsilon}}^{\epsilon}\right)_{t\geq0}$ converges, as $\epsilon\to 0$, to $\left(U_t\right)_{t\geq0}$ which has the following forms depending on the integer $L$ is even or odd.

\begin{itemize}
\item[\textrm{(i)}] When $L=2l$ for some $l\in\N\cup 0$,
\beq\label{tss_U_t_even}
U_t\equiv
\left\{
  \begin{array}{ll}
    \bar{\xi}(x_0)\de_{x_0} & \textrm{for}~ 0\leq t\leq I_1,\\
    \bar{\xi}(x_k)\de_{x_k} & \textrm{for}~ I_k< t\leq I_{k+1}, ~k=1, \cdots, L-1,\\
    \bar{\xi}(x_L)\de_{x_L} & \textrm{for}~ I_L< t\leq I_L+S_{L-2},\\
    \sum\limits_{i=j}^l\bar{\xi}(x_{2i})\de_{x_{2i}} & \textrm{for}~ I_{2j+2}+S_{2j}< t\leq I_{2j}+S_{2j-2}, ~j=l-1, \cdots, 1,\\
    \sum\limits_{i=0}^{l}\bar{\xi}(x_{2i})\de_{x_{2i}} & \textrm{for}~ t> I_2+S_0.
  \end{array}
\right.
\eeq
where $I_k=\sum\limits_{i=1}^k \frac{1}{f_{i,i-1}},$ and $S_k=\frac{|f_{k,k+1}|}{f_{k+2,k+1}(b(x_k)-d(x_k))}$.

\item[\textrm{(ii)}] When $L=2l+1$ for some $l\in\N \cup 0$,
\beq
U_t\equiv
\left\{
  \begin{array}{ll}
    \bar{\xi}(x_0)\de_{x_0} & \textrm{for}~ 0\leq t\leq I_1,\\
    \bar{\xi}(x_k)\de_{x_k} & \textrm{for}~ I_k< t\leq I_{k+1}, ~k=1, \cdots, L-1,\\
    \bar{\xi}(x_L)\de_{x_L} & \textrm{for}~ I_L< t\leq I_L+S_{L-2},\\
    \sum\limits_{i=j}^{l+1}\bar{\xi}(x_{2i-1})\de_{x_{2i-1}} & \textrm{for}~ I_{2j+1}+S_{2j-1}< t\leq I_{2j-1}+S_{2j-3}, ~j=l, \cdots 2, \\
    \sum\limits_{i=1}^{l+1}\bar{\xi}(x_{2i-1})\de_{x_{2i-1}} & \textrm{for}~ t> I_3+S_1.
  \end{array}
\right.
\eeq
\end{itemize}
\end{teor}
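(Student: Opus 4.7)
The strategy is to work directly with the macroscopic ODE \eqref{tss_X sequence} on the discrete trait space $\X$, writing $\xi^\epsilon_t(x_i)$ for $X^\epsilon_t(\{x_i\})$, and to change the time variable to $s=t/\ln(1/\epsilon)$. On this slow scale the rescaled ODE reads
\[
\frac{d\xi^\epsilon_s(x_i)}{ds} = \ln(1/\epsilon)\bigl[F_i(\xi^\epsilon_s) + \epsilon\, G_i(\xi^\epsilon_s)\bigr],
\]
where $F_i$ is the logistic-competition term (nearest-neighbor only, by (C2)) and $G_i$ is the migration flux. The prefactor $\ln(1/\epsilon)$ means that any coordinate with local linear growth rate $r$ and current size $\epsilon^{a}$ evolves, to leading order, like $\epsilon^{a-r(s-s_0)}$, so that transitions occur at the rescaled times where the exponents $a_i(s)$ hit $0$ (invasion) or the coordinate saturates at $\bar{\xi}(x_i)$. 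The plan is an induction on a finite decomposition of $[0,\infty)$ whose endpoints are precisely the $I_k$ and $I_{2j}+S_{2j-2}$ appearing in \eqref{tss_U_t_even}.

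On each forward invasion phase $(I_k,I_{k+1}]$ I would assume by induction that at $s=I_k$ the process is $\bar{\xi}(x_k)\de_{x_k}+o(1)$, with explicit ``seed'' exponents $a_i(I_k)$ for the neighboring traits determined by the migration flux $\epsilon\, b(x_k)\bar{\xi}(x_k) m(x_k,x_{k\pm 1})$ received during the previous phase. The invasion analysis then decouples: by the nearest-neighbor rule (C2) the only competitor of $x_{k+1}$ is $x_k$, with invasion fitness $f_{k+1,k}>0$, so $\xi^\epsilon(x_{k+1})$ grows like $\exp(f_{k+1,k}\ln(1/\epsilon)(s-I_k))$ times its seed and reaches order one at $s=I_{k+1}=I_k+1/f_{k+1,k}$. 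Once $\xi^\epsilon(x_{k+1})$ approaches $\bar{\xi}(x_{k+1})$, standard stability of the two-dimensional Lotka--Volterra subsystem in $(x_k,x_{k+1})$ drives $x_k$ below $\bar{\xi}$ at rate $|f_{k,k+1}|$, while all other coordinates remain exponentially small in $\epsilon$. Assumption (C3) is what rules out a ``skipping'' invasion by a farther trait before the one-by-one substitution has finished: it bounds the cumulative invasion time $I_i$ by $i/(b(x_i)-d(x_i))$, the intrinsic self-regeneration clock of $x_i$.

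After $s=I_L$ the fittest trait $x_L$ is fixed and its only neighbor $x_{L-1}$ is decaying, so nearest-neighbor competition no longer couples $x_{L-2}$ to any macroscopic mass; tracking the exponent $a_{L-2}(s)$ through the forward sweep, $x_{L-2}$ then grows at its intrinsic rate $b(x_{L-2})-d(x_{L-2})$ from its residual mass and hits order one at $s=I_L+S_{L-2}$. Iterating the same exponent bookkeeping produces the recovery times $I_{2j}+S_{2j-2}$ for $x_{L-2j}$, while (C4) guarantees that each even-indexed trait recovers strictly before its odd-indexed neighbor would reassert itself, so the final configuration contains only the even (or, in the odd-$L$ case, only the odd) traits. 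The main obstacle is the bookkeeping of the exponents $a_i(s)$ throughout the forward sweep: each coordinate is simultaneously seeded at order $\epsilon$ by migration from both neighbors, amplified by the exponentially large factor from local competition, and damped when a neighbor becomes dominant; showing that these contributions compose cleanly into piecewise-linear exponents with the claimed slopes, uniformly on compact $s$-intervals and robustly under the $O(\epsilon)$ migration perturbations, is the delicate point, and is precisely where (C3) and (C4) are used in their sharp forms. Once this is established, convergence on $D([0,\infty),\M_F(\X))$ follows from the fact that $U_t$ is piecewise constant with only finitely many jumps on any compact interval.
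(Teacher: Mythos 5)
Your proposal is correct and follows essentially the same mechanism as the paper's proof: iterated migration seeds trait $x_k$ at order $\epsilon^k$, exponential growth/decay at the fitness rates turns these exponents into the times $I_k$ and $S_k$ on the $\ln\frac{1}{\epsilon}$ scale, $O(1)$-time Lotka--Volterra swaps (Lemma \ref{lemma_noncoexistence condition}) effect each substitution, and (C3), (C4) play exactly the roles you assign them. The paper carries out your ``exponent bookkeeping'' concretely as a two-sided Gronwall sandwich with rates $f\pm C\eta$ followed by $\eta\to 0$, organized as an induction that builds the $L$-trait tree by appending three-type excursions to the $(L-2)$-trait tree (Propositions \ref{TSS_3TYPE} and \ref{TSS_4TYPE} being the base cases), so the two arguments differ only in packaging.
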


 \begin{figure}[hbtp]
 \centering
 \includegraphics[width=400pt]{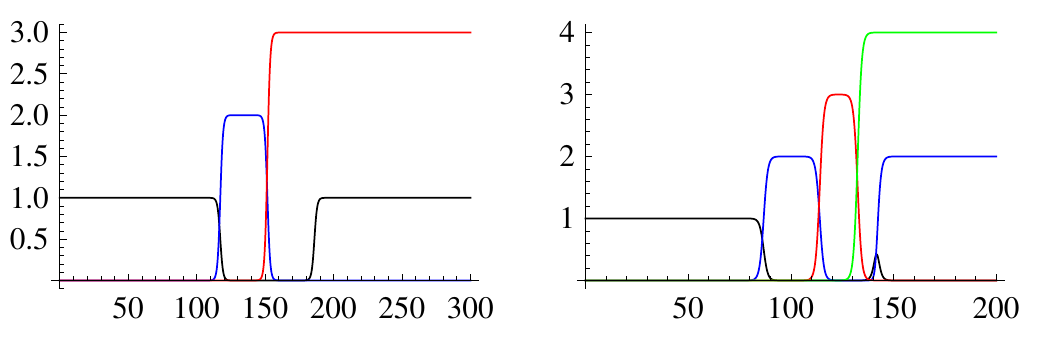}
  \def\svgwidth{400pt}
 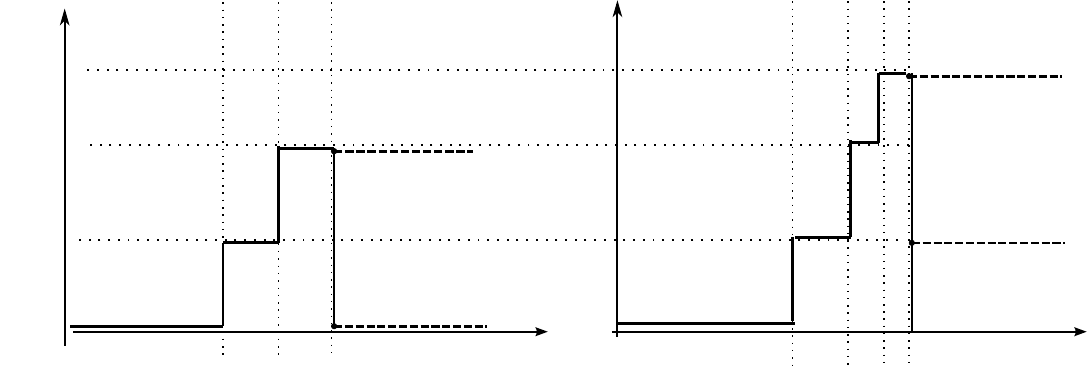
 \caption{\footnotesize Numerical simulations of evolution of a dynamical system with monomorphic initial type and finite trait space (the upper left one has $\X=\{x_0, x_1, x_2\}$ while the upper right one has $\X=\{x_0, x_1, x_2, x_3\}$). Curves describing $\xi_t(x_0), \xi_t(x_1), \xi_t(x_2), \xi_t(x_3)$ are colored black, blue, red, green, resp.. The equilibrium configuration for the first case is $\de_{x_0}+3\de_{x_2}$ and is $2\de_{x_1}+4\de_{x_3}$ for the second one. The lower panel gives their corresponding ``trait substitution tree" structure.}
 \label{treesimulation}
 \end{figure}

\begin{rem}\label{Remark:TST_finite} (1) As time passes on, the limiting process $(U_t)_{t\geq 0}$ starts with monomorphic substitutions up to the domination of the fittest trait. Afterwards, the relatively unfit traits start to recover along the fitness decreasing direction. From the fittest trait back to the initial one every second one appears in the limit. For instance, when $\X=\{x_0,x_1,x_2\}$, the stable configuration has support $\{x_0,x_2\}$; when $\X=\{x_0,x_1,x_2,x_3\}$, the stable configuration has support $\{x_1,x_3\}$ (see Figure \ref{treesimulation}). This is because the competition is restricted between nearest neighbors, and the trait on the right hand side is always fitter than the traits on the left.
\begin{figure}[hbtp]
 \centering
 \def\svgwidth{350pt}
 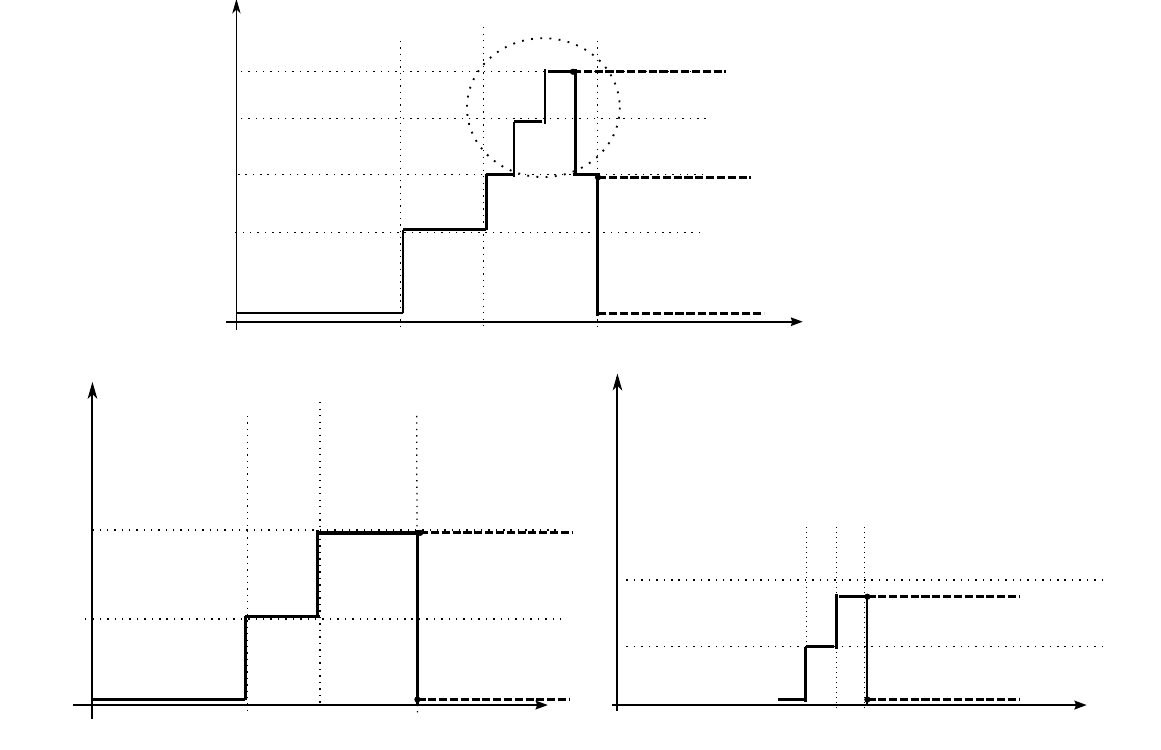
 \caption{\footnotesize Trait substitution tree constructed by embedding excursions.}
 \label{treeconstruction}
 \end{figure}

(2) The TST process indexed by $L+2$ can be constructed from the TST process indexed by $L$ by adding a three-type sub-tree on top of it. For instance, it is shown in the Figure \ref{treeconstruction} that the TST (when $L=4$) can be constructed from a smaller TST (when $L=2$) by connecting another excursion consisting of traits $\{x_2,x_3,x_4\}$.

\end{rem}

We postpone the proof of the above result to Section \ref{section 2.5.1}.


\section{TST on an infinite trait space: with mutation}\label{section 2.4}
In Section \ref{section 2.3} we analyze a continuous-mass population on a finite trait space defined by equation \eqref{tss_X sequence}. On the way towards its equilibrium configuration, under some restrictive conditions, a deterministic evolutionary picture arises on the slow migration time scale $O\left(\ln\frac{1}{\epsilon}\right)$. 

In order to generalize the process to infinite trait space, we introduce another evolutionary mechanism, 
mutation of a trait $x$ with a transition kernel $p(x,dh)$ for mutant variation such that $x+h\in\X$. 
Notice that the essential difference between mutation and migration is that mutation creates some new trait,
 while migration is only allowed among the existing traits. More precisely, we specify a new model $\{X^{\epsilon,\sigma}_t, t\geq 0\}$ on $D([0,\infty),\M_F(\X))$ with the following infinitesimal generator, for any $\epsilon, \sigma\geq0$ and proper test functions $F$ and $\phi$,

\beq\label{generator_X_epsilon,sigma}
\bea
L^{\epsilon,\sigma}F(\nu)=
&\int_{\X}\left[b(x)-d(x)-\int_{\X}\al(x,y)\nu(dy)\right]\frac{\delta F(\nu;x)}{\delta\nu}\nu(dx)\\
&+\epsilon\int_{\X} A\Big(\frac{\delta F(\nu;x)}{\delta\nu}\Big)\nu(dx)\\
&+\sigma\int_{\X}\int_{\R^d}\left[F(\nu+\rho\delta_{x+h})-F(\nu)\right]\mu(x)p(x,dh)\nu(dx),
\eea
\eeq
where the derivative  of $F$ is defined by
\beq
\frac{\delta F(\nu;x)}{\delta\nu}=\lim_{\varepsilon\to 0+}\frac{F(\nu+\varepsilon\delta_x)-F(\nu)}{\varepsilon}
\eeq
and the operator $A$ coincides with the migration term in \eqref{tss_X sequence}
\beq
A\phi(x)=\int_{\X}\big[\phi(y)-\phi(x)\big]1_{\{y\in\textrm{supp}\{\nu\}\}}m(x,dy).
\eeq
The first term of the generator describes the local regulation of population dynamics. The second term describes migration among supporting trait sites. Note that  migration is not restricted to birth events 
any more as in \eqref{tss_X sequence}, which is reasonable if we interpret them as a changes in gene expression. 
The last term creates a new mutant trait to the current population. The mutant mass is specified by a 
magnitude of $\rho>0$, which can be taken to zero in a final step. The non-negative function
 $\mu(x)$ describes the mutation rate of the resident trait $x$. The parameters $\epsilon$ and $\sigma$ are used to rescale the strength of migration and mutation of the population. For any fixed $\epsilon,\, \sigma>0$, the process $\{X^{\epsilon,\sigma}_t, t\geq 0\}$ can be obtained as a large population
limit (as $n\to\infty$) of the processes specified by the  generator
\beq
\bea
 L^{n,\epsilon,\sigma} F(\nu)
    &= \int_{\X}\left[F(\nu+\frac{\delta_x}{n})-F(\nu)\right]b(x)n\nu(dx)\\
    &+\int_{\X}\left[F(\nu-\frac{\delta_x}{n})-F(\nu)\right]\left(d(x)+\int_{\X}\alpha(x,y)\nu(dy)\right){n}\nu(dx)\\
    &+\epsilon\int_{\X}\int_{\X}\left[F(\nu+\frac{\de_{y}}{n}-\frac{\de_{x}}{n})-F(\nu)\right]1_{\{y\in
     \textrm{supp}\{\nu\}\}}m(x,dy)n\nu(dx)\\
    &+\sigma\int_{\X}\int_{\R^d}\left[F(\nu+\rho\de_{x+h})-F(\nu)\right]\mu(x)p(x,dh)\nu(dx).
\eea
\eeq

 For more discussion on discontinuous superprocesses with a general branching mechanism, one can refer to \cite{Li10}. We will not expand the discussion here.

The following assumptions (D) ensure that the limiting TST process is well-defined.
\begin{itemize}

\item[(D1)] For any given set of distinct traits $\{x_0, x_1,\cdots, x_n\}\subset\X, n\in\N$, there exists a total order permutation
\beq
x_{n_0}\prec x_{n_1}\prec\cdots\prec x_{n_{n-1}}\prec x_{n_n},
\eeq
where $x\prec y$ means that the fitness functions satisfy $f(x,y):=b(x)-d(x)-\al(x,y)\bar\xi(y)<0$, and $f(y,x):=b(y)-d(y)-\al(y,x)\bar\xi(x)>0$.

For simplicity, we always assume $x_0^{(n)}\prec x_1^{(n)}\prec\cdots\prec x_n^{(n)}$ with $x^{(n)}_i=x_{n_i}$, $1\leq i\leq n$. Every time  a new trait $x$ appears whose fitness is between  $x_j^{(n)}$ and $x_{j+1}^{(n)}$ for some $0\leq j\leq n$, we relabel the traits as follows
\beq
x^{(n+1)}_0\prec x^{(n+1)}_1\prec\cdots\prec x^{(n+1)}_n\prec x^{(n+1)}_{n+1},
\eeq
where $x^{(n+1)}_i=x^{(n)}_i$ for $0\leq i\leq j$, $x^{(n+1)}_{j+1}=x$ and $x^{(n+1)}_{i}=x^{(n)}_{i-1}$ for $j+2\leq i\leq n+1$.

\item[(D2)] Competition and migration only occurs between nearest neighbors, i.e., for totally ordered traits in (D1), we have $m(x_i^{(n)},x_j^{(n)})=\al(x_i^{(n)},x_j^{(n)})\equiv 0$ for $\mid i-j\mid>1$.
\end{itemize}

Notice that assumptions (C3-C4) 
provide a convenient setting for which the  evolutionary path on the migration time scale can be easily 
identified. More complex situations can, however, be analyzed in a similar way and lead to qualitatively 
similar results.


We now give a description of the limiting process on the mutation time-scale.
\begin{Def}\label{TST_Definition_infinite tree} A $\M_F(\X)$-valued Markov jump process $\{\Gamma_t: t\geq 0\}$ characterized as follows is called a trait substitution tree with the ancestor $\Gamma_0=\bar\xi({x_0})\delta_{x_0}$.
\begin{itemize}
\item[(i)] For any non-negative integer $l$, it jumps
    from $\Gamma^{(2l)}:=\sum_{i=0}^l\bar\xi(x^{(2l)}_{2i})\delta_{x^{(2l)}_{2i}}$
    to $\Gamma^{(2l+1)}$\\
    with transition rate $\bar\xi(x^{(2l)}_{2k})\mu(x^{(2l)}_{2k})p(x^{(2l)}_{2k},dh)$ for any $0\leq k\leq l$, where
    \begin{itemize}
    \item $\Gamma^{(2l+1)}=\sum_{i=1}^j\bar\xi(x^{(2l)}_{2i-1})\delta_{x^{(2l)}_{2i-1}}+\bar\xi(x^{(2l)}_{2k}+h)\delta_{x^{(2l)}_{2k}+h}+\sum_{i=j+1}^l\bar\xi(x^{(2l)}_{2i})\delta_{x^{(2l)}_{2i}}$\\
        \vspace{4mm}
        if there exists $0\leq j\leq l$ s.t.
        $x^{(2l)}_{2j}\prec x^{(2l)}_{2k}+h\prec x^{(2l)}_{2j+1}$,\\
    \item $\Gamma^{(2l+1)}=\sum_{i=1}^j\bar\xi(x^{(2l)}_{2i-1})\delta_{x^{(2l)}_{2i-1}}+\sum_{i=j}^l\bar\xi(x^{(2l)}_{2i})\delta_{x^{(2l)}_{2i}}$\\ \vspace{4mm}
        if there exists $0\leq j\leq l$ s.t.
        $x^{(2l)}_{2j-1}\prec x^{(2l)}_{2k}+h\prec x^{(2l)}_{2j}$.
    \end{itemize}
    Then, we relabel the trait sequence according to the total order relation as in (D1):
    \beq
    x_0^{(2l+1)}\prec x_1^{(2l+1)}\prec\cdots\prec  x_{2l}^{(2l+1)}\prec x_{2l+1}^{(2l+1)},
    \eeq
    where in associate with the first case
    \begin{align*}
    &x_i^{(2l+1)}:=x_i^{(2l)} ~\textrm{for} ~0\leq i\leq 2j,\qquad x^{(2l+1)}_{2j+1}:=x^{(2l)}_{2k}+h,\\
    &x_i^{(2l+1)}:= x_{i-1}^{(2l)}~\textrm{for} ~2j+2\leq i\leq 2l+1,
    \end{align*}
    and in associate with the second case
    \begin{align*}
    &x_i^{(2l+1)}:=x_i^{(2l)} ~\textrm{for} ~0\leq i\leq 2j-1,\quad x^{(2l+1)}_{2j}:=x^{(2l)}_{2k}+h,\\
    &x_i^{(2l+1)}:= x_{i-1}^{(2l)} ~\textrm{for} ~2j+1\leq i\leq 2l+1.
    \end{align*}

\item[(ii)] For non-negative integer $l$, it jumps
    from $\Gamma^{(2l+1)}:=\sum_{i=1}^{l+1}\bar\xi(x^{(2l+1)}_{2i-1})\delta_{x^{(2l+1)}_{2i-1}}$ to $\Gamma^{(2l+2)}$\\
    \vspace{4mm}
    with transition rate $\bar\xi(x^{(2l+1)}_{2k-1})\mu(x^{(2l+1)}_{2k-1})p(x^{(2l+1)}_{2k-1},dh)$ for any $1\leq k\leq l+1$, where
    \vspace{4mm}
    \begin{itemize}
    \item $\Gamma^{(2l+2)}=\sum_{i=1}^j\bar\xi(x^{(2l+1)}_{2(i-1)})\delta_{x^{(2l+1)}_{2(i-1)}}+\bar\xi(x^{(2l+1)}_{2k-1}+h)\delta_{x^{(2l+1)}_{2k-1}+h}+\sum_{i=j+1}^{l+1}\bar\xi(x^{(2l+1)}_{2i-1})\delta_{x^{(2l+1)}_{2i-1}}$\\
        \vspace{4mm}
        if there exists $1\leq j\leq l+1$ s.t.
        $x^{(2l+1)}_{2j-1}\prec x^{(2l+1)}_{2k-1}+h\prec x^{(2l+1)}_{2j}$,\\
    \item  $\Gamma^{(2l+1)}=\sum_{i=1}^j\bar\xi(x^{(2l+1)}_{2(i-1)})\delta_{x^{(2l+1)}_{2(i-1)}}+\sum_{i=j}^{l+1}\bar\xi(x^{(2l+1)}_{2i-1})\delta_{x^{(2l+1)}_{2i-1}}$\\ \vspace{4mm}
        if there exists $1\leq j\leq l+1$ s.t.
        $x^{(2l+1)}_{2j-2}\prec x^{(2l+1)}_{2k-1}+h\prec x^{(2l+1)}_{2j-1}$.
    \end{itemize}
    Then, we relabel the trait sequence according to the total order relation as in (D1):
    \beq
    x_0^{(2l+2)}\prec x_1^{(2l+2)}\prec\cdots\prec  x_{2l+1}^{(2l+2)}\prec x_{2l+2}^{(2l+2)},
    \eeq
    where in the first case
    \begin{align*}
    &x_i^{(2l+2)}:=x_i^{(2l+1)} ~\textrm{for} ~0\leq i\leq 2j-1,\quad x^{(2l+2)}_{2j}:=x^{(2l+1)}_{2k-1}+h,\\
    &x_i^{(2l+2)}:= x_{i-1}^{(2l+1)}~\textrm{for} ~2j+1\leq i\leq 2l+2,
    \end{align*}
    and in  the second case
    \begin{align*}
    &x_i^{(2l+2)}:=x_i^{(2l+1)} ~\textrm{for} ~0\leq i\leq 2j-2,\quad x^{(2l+2)}_{2j-1}:=x^{(2l+1)}_{2k-1}+h,\\
    &x_i^{(2l+2)}:= x_{i-1}^{(2l+1)}~\textrm{for} ~2j\leq i\leq 2l+2.
    \end{align*}
\end{itemize}

\end{Def}

\begin{figure}[hbtp]
 \centering
 \def\svgwidth{450pt}
 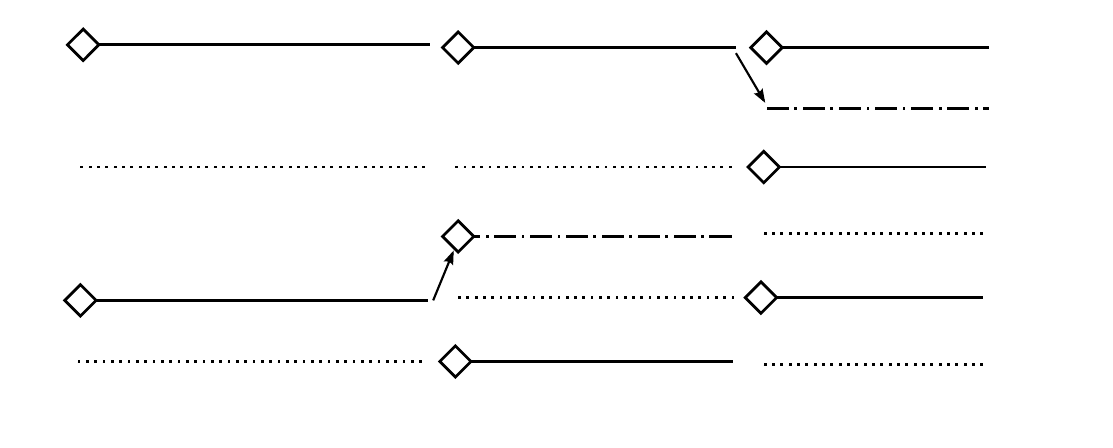
 \caption{\footnotesize A partial path ($\Gamma^{(3)}\to\Gamma^{(4)}\to\Gamma^{(5)}$) of the supporting set of the Trait Substitution Tree process defined by Definition \ref{TST_Definition_infinite tree}.
  Solid lines with ``diamond'' get fixed in the TST process, dotted lines denote virtual traits which are temporarily lost, while dashed lines denote new mutant traits.  Arrows denote creation of new mutant types. Starting with state $\Gamma^{(3)}$, the first mutant arises from trait $x^{(3)}_1$, and new trait fits into $x^{(3)}_1\prec x^{(3)}_1+h_1\prec x^{(3)}_2$. According to Definition \ref{TST_Definition_infinite tree} (ii), traits $x^{(3)}_0,\,x^{(3)}_1+h_1, \,x^{(3)}_3$ get fixed. 
 Consecutively, based on state configuration $\Gamma^{(4)}$, the second mutant arises from trait $x^{(4)}_4$ , and new trait fits into $x^{(4)}_3\prec x^{(4)}_4+h_2\prec x^{(4)}_4$. 
 By Definition \ref{TST_Definition_infinite tree} (i), traits $x^{(4)}_1,\,x^{(4)}_3, \,x^{(4)}_4$ get fixed.}  
 \label{TSTpath}
 \end{figure}

\begin{rem} (see Figure \ref{TSTpath}).
 According to the definition, the new configuration is constructed in a way that every second trait gets
stabilized when one ``looks down''  from the fittest trait along the fitness landscape. Once a mutant is 
inserted between two trait levels (say, $i$ and $i+1$), we relabel all the traits above 
 the mutant's level. However, the mutation only alters the configuration below the $i+1$th level and  
not above. To some extent, the construction here is similar to the look-down idea of Coalescent processes (see \cite{DK99}).  
\end{rem}

\begin{teor}\label{TST_theorem_infinite trait}
Admit assumption (A) and (D), and consider processes $\{X^{\epsilon,\sigma}_t, t\geq 0\}$ described by the generator \eqref{generator_X_epsilon,sigma}. Suppose that $X_0^{\epsilon,\sigma}=\xi^{\epsilon}(x_0)\delta_{x_0}$ and $\xi^{\epsilon}(x_0)\rightarrow\bar\xi(x_0)$ in law and $\rho\to0$, as $\epsilon\to 0$. If it holds that
\beq
\frac{1}{\sigma}\gg\ln\frac{1}{\epsilon},
\eeq
then $(X^{\epsilon,\sigma}_{\frac{t}{\sigma}})_{t\geq 0}$ converges, as $\epsilon\to 0$, 
to the trait substitution tree $(\Gamma_{t})_{t\geq 0}$  given in Definition 
\ref{TST_Definition_infinite tree}. Convergence is  in the sense of finite dimensional distribution.
\end{teor}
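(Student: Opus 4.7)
The plan is to exploit the scale separation $1/\sigma \gg \ln(1/\epsilon)$: on the accelerated time scale $t/\sigma$, mutation events arrive at Poisson times of order one, whereas between consecutive mutations there remains time of order $1/(\sigma\ln(1/\epsilon)) \to \infty$, measured in migration-time units $\ln(1/\epsilon)$, for the deterministic migration/competition dynamics to reach the TST equilibrium identified by Theorem \ref{TST_deterministic}. Consequently, the rescaled process should converge in finite-dimensional distribution to a pure-jump Markov process whose jump rates and jump targets match exactly those prescribed in Definition \ref{TST_Definition_infinite tree}.

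I would proceed by induction on the number of mutation events. For the base case, starting from $X_0^{\epsilon,\sigma} = \xi^{\epsilon}(x_0)\delta_{x_0}$, the logistic term brings $\xi^{\epsilon}_t(x_0)$ to $\bar\xi(x_0)$ in $O(1)$ time, so that on the scale $t/\sigma$ the first mutation arrives at an exponential clock of parameter $\bar\xi(x_0)\mu(x_0)$ with displacement drawn from $p(x_0,dh)$, as in Definition \ref{TST_Definition_infinite tree} (i) with $l=0$. Immediately after the mutation, the configuration is $\bar\xi(x_0)\delta_{x_0}+\rho\delta_{x_0+h}$; relabeling to $\{x_0^{(1)},x_1^{(1)}\}$ and applying a variant of Theorem \ref{TST_deterministic} with $L=1$ yields the new equilibrium $\bar\xi(x_1^{(1)})\delta_{x_1^{(1)}}$ within $O(\ln(1/\epsilon))$ migration units, which is negligible on the mutation scale. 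The inductive step is identical: conditional on the system being in a TST equilibrium $\Gamma^{(n)}$ supported on every other site of a totally ordered $n+1$-point relabeled trait set, the next mutation clock is exponential with total rate $\sum_x \Gamma^{(n)}(\{x\})\mu(x)$ and origin $x$ with probability $\propto \bar\xi(x)\mu(x)$; after insertion of $x+h$ and relabeling, Theorem \ref{TST_deterministic} applied to the resulting $n+2$-point trait space (with $L$ of alternating parity) yields precisely the configuration $\Gamma^{(n+1)}$ of Definition \ref{TST_Definition_infinite tree}. Finite-dimensional convergence then follows from the strong Markov property applied at each of the finitely many mutation times occurring before the largest target time.

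The main technical obstacle will be extending Theorem \ref{TST_deterministic} to handle the post-mutation initial condition, in which one trait carries only mass $\rho \to 0$ while the remaining traits sit at their previous equilibrium values, rather than the monomorphic initial data assumed in Section \ref{section 2.3}. This requires retracing the proof of Theorem \ref{TST_deterministic} and checking that (D1)--(D2) suffice in place of (C3)--(C4): the fitness-landscape argument driving successive invasion and recovery depends only on the signs of the pairwise fitness functions and on the nearest-neighbor constraint, both of which are encoded in (D1)--(D2); the extra quantitative conditions (C3)--(C4) merely control the precise timing of the excursions, which becomes irrelevant in the mutation-scale limit once equilibration happens within $O(\ln(1/\epsilon)) \ll 1/\sigma$. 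A secondary obstacle is ruling out two mutations within a single equilibration window: a union bound yields failure probability $O(\sigma\ln(1/\epsilon)) \to 0$ per mutation event, which suffices over any fixed finite horizon since the total number of mutations before time $t$ is stochastically bounded.
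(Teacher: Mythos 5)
Your proposal follows essentially the same route as the paper: an exponential-clock lemma for the rescaled mutation times, an equilibration lemma showing the post-mutation configuration $\Gamma^{(n)}+\rho\delta_{x+h}$ reaches the new TST equilibrium $\Gamma^{(n+1)}$ within $O(\ln\frac{1}{\epsilon})$ (negligible on the mutation scale), and an induction via the strong Markov property at the mutation times. The technical obstacle you flag is exactly what the paper's Lemma \ref{Lemma_new config. TST} resolves, by treating the mutant and its nearest neighbor as an isolated dimorphic Lotka--Volterra system and letting the rearrangement cascade down the fitness order, rather than by literally re-running Theorem \ref{TST_deterministic}; your observation that (C3)--(C4) only control timing and are dispensable here matches the paper's remark.
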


We postpone the proof of the above result in Section \ref{section 2.5.2}.


\section{Outline of proofs}\label{section 2.5}
\subsection{Proof of Theorem \ref{TST_deterministic}}\label{section 2.5.1}

In this section we present the proofs of the results of Section $\ref{section 2.3}$. 
The main idea behind the proofs is that the migration spreads linearly and the nearest neighbor competitive growth spreads exponentially fast.
Before proving Theorem \ref{TST_deterministic} we state some preliminary results which are key ingredients for the proof of Theorem \ref{TST_deterministic}.

The following lemma ensures  the non-coexistence condition for a dimorphic Lotka-Volterra system. 
We  give  the proof in the appendix.
\begin{lem}\label{lemma_noncoexistence condition}
Consider a dimorphic system
\begin{align} \label{only-eq}
\left\{
\begin{array}{l}
\dot{\xi}_t(x_i)=\Big(b(x_i)-d(x_i)-\alpha(x_i,x_i)\xi_t(x_i)-\alpha(x_i,x_{i+1})\xi_t(x_{i+1})\Big)\xi_t(x_i)\\
\dot{\xi}_t(x_{i+1})=\Big(b(x_{i+1})-d(x_{i+1})-\alpha(x_{i+1},x_i)\xi_t(x_i)-\alpha(x_{i+1},x_{i+1})\xi_t(x_{i+1})\Big)\xi_t(x_{i+1}),
\end{array}
\right.
\end{align}
with some positive initial condition.
 If $f_{i,i+1}<0,\,f_{i+1,i}>0$, then $\big(0, \bar\xi(x_{i+1})\big)$ is the only stable equilibrium of 
\eqref{only-eq}.
\end{lem}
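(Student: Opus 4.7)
The plan is to treat \eqref{only-eq} as a standard two-dimensional competitive Lotka--Volterra system and combine a nullcline-style elimination of interior equilibria with linearization at the boundary equilibria. First I would list the candidate equilibria: the trivial point $(0,0)$, the two semi-trivial points $(\bar\xi(x_i),0)$ and $(0,\bar\xi(x_{i+1}))$, and at most one interior point $(\xi^*,\eta^*)$ with both coordinates strictly positive.

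The interior point, if it exists, solves the linear system
\begin{align*}
\alpha(x_i,x_i)\,\xi^* + \alpha(x_i,x_{i+1})\,\eta^* &= b(x_i)-d(x_i), \\
\alpha(x_{i+1},x_i)\,\xi^* + \alpha(x_{i+1},x_{i+1})\,\eta^* &= b(x_{i+1})-d(x_{i+1}).
\end{align*}
Cramer's rule together with the definition $\bar\xi(x_j)=(b(x_j)-d(x_j))/\alpha(x_j,x_j)$ yields
\[
\xi^* = \frac{\alpha(x_{i+1},x_{i+1})\,f_{i,i+1}}{\Delta}, \qquad \eta^* = \frac{\alpha(x_i,x_i)\,f_{i+1,i}}{\Delta},
\]
where $\Delta = \alpha(x_i,x_i)\alpha(x_{i+1},x_{i+1}) - \alpha(x_i,x_{i+1})\alpha(x_{i+1},x_i)$. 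Since $f_{i,i+1}<0$ and $f_{i+1,i}>0$ by hypothesis while $\Delta$ carries a single sign, $\xi^*$ and $\eta^*$ are forced to have opposite signs; hence no interior equilibrium lies in the open first quadrant.

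Next I would compute the Jacobian of \eqref{only-eq} at each remaining equilibrium. At $(0,0)$ the two eigenvalues are $b(x_i)-d(x_i)$ and $b(x_{i+1})-d(x_{i+1})$, both positive by (A2), so the origin is a source. At $(\bar\xi(x_i),0)$ the Jacobian is upper triangular with diagonal entries $-(b(x_i)-d(x_i))<0$ and $f_{i+1,i}>0$, hence a saddle. At $(0,\bar\xi(x_{i+1}))$ it is lower triangular with diagonal entries $f_{i,i+1}<0$ and $-(b(x_{i+1})-d(x_{i+1}))<0$, which identifies this equilibrium as (linearly, hence asymptotically) stable and yields the statement of the lemma.

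For completeness, and because Section \ref{section 2.3} really needs the stronger fact that every strictly positive orbit converges to $(0,\bar\xi(x_{i+1}))$, I would also record the following. Orbits remain bounded since $\dot\xi_t(x_i)<0$ as soon as $\alpha(x_i,x_i)\xi_t(x_i)>b(x_i)-d(x_i)$, and analogously for $\xi_t(x_{i+1})$. The Dulac function $B(u,v)=1/(uv)$ on $(0,\infty)^2$ gives $\partial_u(B\dot u)+\partial_v(B\dot v) = -\alpha(x_i,x_i)/v - \alpha(x_{i+1},x_{i+1})/u < 0$, ruling out periodic orbits, so Poincar\'e--Bendixson forces every orbit to converge to some equilibrium; since the stable manifolds of the source and the saddle are contained in the coordinate axes, the only possible limit from the open first quadrant is $(0,\bar\xi(x_{i+1}))$. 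The only slightly delicate piece of bookkeeping is the Cramer's-rule identification of $\xi^*,\eta^*$ in terms of $f_{i,i+1}$ and $f_{i+1,i}$; everything else is classical planar competitive Lotka--Volterra theory.
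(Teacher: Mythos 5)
Your proposal is correct and follows essentially the same route as the paper's appendix: enumerate the four equilibria, exclude the interior one via the Cramer's-rule sign computation $\xi^*\propto\alpha(x_{i+1},x_{i+1})f_{i,i+1}$, $\eta^*\propto\alpha(x_i,x_i)f_{i+1,i}$ (opposite signs under the hypotheses), and classify $(0,0)$, $(\bar\xi(x_i),0)$, $(0,\bar\xi(x_{i+1}))$ by the triangular Jacobians. Your closing Dulac/Poincar\'e--Bendixson paragraph establishing global convergence of positive orbits is a correct addition that the paper does not carry out explicitly, even though the arguments of Section \ref{section 2.3} implicitly rely on that stronger conclusion.
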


The following two propositions are used to prove Theorem \ref{TST_deterministic}.
\begin{prop}\label{TSS_3TYPE} Under the assumptions of   Theorem \ref{TST_deterministic}, for 
the case when $L=2$ (i.e. $\X=\{x_0,x_1,x_2\}$), the limit process $(U_t)_{t\geq 0}$ has the form
\beq\label{tss_3type}
U_t\equiv
\left\{
  \begin{array}{ll}
    \bar{\xi}(x_0)\de_{x_0} & \textrm{for}~ 0\leq t\leq I_1, \\
    \bar{\xi}(x_1)\de_{x_1} & \textrm{for}~ I_1< t\leq I_2, \\
    \bar{\xi}(x_2)\de_{x_2} & \textrm{for}~ I_2< t\leq I_2+S_0,\\
     \bar{\xi}(x_0)\de_{x_0}+\bar{\xi}(x_2)\de_{x_2} & \textrm{for}~ t> I_2+S_0,
  \end{array}
\right.
\eeq
where $I_1=\tfrac{1}{f_{1,0}}$, $I_2=\tfrac{1}{f_{1,0}}+\tfrac{1}{f_{2,1}}$, and 
$S_0=\tfrac{|f_{0,1}|}{f_{2,1}(b(x_0)-d(x_0))}$.
\end{prop}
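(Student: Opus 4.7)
The plan is to track the exponential orders of magnitude of the three masses on the rescaled clock. Setting
\[
\be^{\e}_i(t):=-\frac{\ln \xi^{\e}_{t\ln(1/\e)}(x_i)}{\ln(1/\e)},\qquad i=0,1,2,
\]
so that $\xi^{\e}_{t\ln(1/\e)}(x_i)\approx \e^{\be^{\e}_i(t)}$, I will show that as $\e\to 0$ each $\be^{\e}_i$ converges pointwise to a piecewise-linear function. Its breakpoints are precisely the instants when some $\be^{\e}_i$ crosses $0$, i.e.\ when the $i$-th trait first reaches macroscopic size; between such crossings the exponents evolve linearly at rates fixed by the frozen fitness coefficients, and at each breakpoint Lemma \ref{lemma_noncoexistence condition} handles an instantaneous (on the rescaled clock) invasion.

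\textbf{Phases I--IV (build-up of $x_1$ and then $x_2$).} In Phase I, $0<t<I_1$, one has $\xi^{\e}(x_0)\to\bar\xi(x_0)$ while migration from $x_0$ injects amplitude $\e$ into $x_1$ and $x_1$ grows at rate $f_{1,0}>0$, so $\be^{\e}_1(t)\to 1-f_{1,0}t$; assumption (C3) at $i=2$ keeps $\be^{\e}_2(t)$ strictly positive throughout. At $t=I_1=1/f_{1,0}$ the exponent $\be^{\e}_1$ hits $0$, and because $\xi^{\e}(x_2)$ is still $\e$-small while $\al(x_0,x_2)=0$, the effective dynamics is the dimorphic $(x_0,x_1)$-system of Lemma \ref{lemma_noncoexistence condition}; since $f_{0,1}<0$ and $f_{1,0}>0$, this system reaches $(0,\bar\xi(x_1))$ in an original-time window of length $O(1)$, i.e.\ instantaneously in rescaled time. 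Phase III ($I_1<t<I_2$) is the analogous story one step up: with $\xi^{\e}(x_1)\approx\bar\xi(x_1)$, migration into $x_2$ has amplitude $\e$ and its autonomous growth rate is $f_{2,1}>0$, so $\be^{\e}_2(t)\to 1-f_{2,1}(t-I_1)$, hitting $0$ at $I_2$. A second application of Lemma \ref{lemma_noncoexistence condition} produces the instantaneous invasion of $x_2$ into $x_1$, leaving $\xi^{\e}(x_2)\to\bar\xi(x_2)$ and $\xi^{\e}(x_1)\to 0$.

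\textbf{Phase V (recovery of $x_0$).} Throughout Phases II--IV the trait $x_0$ sees only the competitor $x_1$ (nearest-neighbour rule) at density $\approx\bar\xi(x_1)$, so its per-capita rate is $f_{0,1}<0$ and
\[
\be^{\e}_0(t)\to |f_{0,1}|(t-I_1),\qquad I_1\le t\le I_2,
\]
giving $\be^{\e}_0(I_2)\to|f_{0,1}|/f_{2,1}$. The first inequality in (C4) at $i=0$ ensures this limit is strictly below $1$, so $x_0$ remains distinguishable from the $\e$-migration floor. After $t=I_2$, $\xi^{\e}(x_1)$ decays exponentially at rate $|f_{1,2}|$ and its time integral contributes only an $O(1)$ correction to $\ln\xi^{\e}(x_0)$; since $\al(x_0,x_2)=0$, the linearised equation for $x_0$ is then autonomous with rate $b(x_0)-d(x_0)>0$, whence
\[
\be^{\e}_0(t)\to \frac{|f_{0,1}|}{f_{2,1}} - \bigl(b(x_0)-d(x_0)\bigr)(t-I_2),
\]
which reaches $0$ precisely at $t=I_2+S_0$. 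Once $\xi^{\e}(x_0)$ is again of order one, a short relaxation step (no invasion is needed, since $x_0$ and $x_2$ do not compete) drives the system to the coexistence equilibrium $\bar\xi(x_0)\de_{x_0}+\bar\xi(x_2)\de_{x_2}$.

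\textbf{Main obstacle.} The delicate part is to turn the above exponent bookkeeping into uniform estimates in $\e$. I would bootstrap across phases: on each open sub-interval of $[0,\infty)$ where all relevant exponents stay away from $0$, the $\e$-migration terms in \eqref{tss_X sequence} are genuinely lower-order, and a Gronwall comparison yields $\be^{\e}_i(t)$ within $o(1)$ of the affine target; the short invasion windows at $I_1$ and $I_2$ are then absorbed by applying Lemma \ref{lemma_noncoexistence condition} to the frozen two-dimensional subsystem after truncating the negligible third component. The hardest case is Phase V, where one must simultaneously control the decay of $\xi^{\e}(x_1)$ and the slow recovery of $\xi^{\e}(x_0)$ from $\e^{|f_{0,1}|/f_{2,1}}$ back to $O(1)$: this is exactly where the strict inequality $|f_{0,1}|/f_{2,1}<1$ in (C4) is needed, as it prevents $x_0$ from being swamped by migration noise before it can re-emerge.
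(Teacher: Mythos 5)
Your proposal is correct and follows essentially the same route as the paper: the logarithmic-exponent bookkeeping $\be^{\e}_i(t)$ is a repackaging of the paper's explicit $\eta$-level stopping times and Gronwall sandwiches ($\check\xi_t\le\xi^{\e}_t\le\hat\xi_t$ with rates $f\pm C\eta$), with the same phase decomposition, the same use of Lemma \ref{lemma_noncoexistence condition} for the $O(1)$-time swaps, the same intermediate level $\e^{|f_{0,1}|/f_{2,1}}$ for $x_0$, and the same roles for (C3) and (C4). The rigorous implementation you defer to in your ``Main obstacle'' paragraph is precisely what the paper's four-step analysis carries out.
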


\begin{proof}
\textbf{(a)} First, suppose that the population consists  of only two types,  $\X=\{x_0, x_1\}$. 
We divide the entire invasion period into four steps, as shown in Figure \ref{dynamics_2type}.

 \begin{figure}[hbtp]
 \centering
 \def\svgwidth{250pt}
 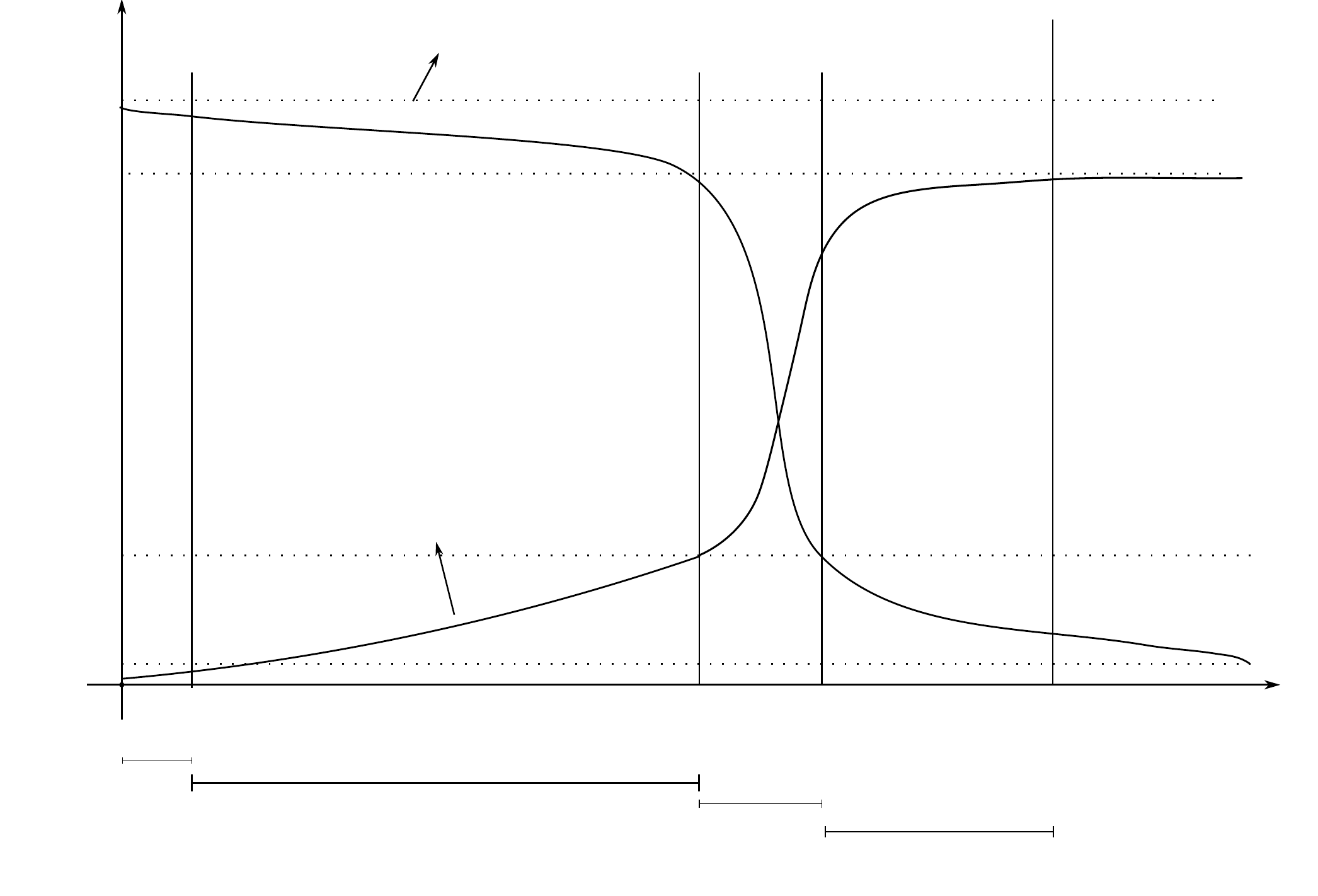
 \caption{{\footnotesize Four-step invasion analysis for a dimorphic system }}.
 \label{dynamics_2type}
 \end{figure}

Let $\xi_t^{\epsilon}(x_0):=\langle X_t^{\epsilon}, 1_{\{x_0\}}\rangle$ and $\xi_t^{\epsilon}(x_1):=\langle X_t^{\epsilon}, 1_{\{x_1\}}\rangle$. From \eqref{tss_X sequence} one obtains
\beq\label{x_0 equation}
\bea
\dot {\xi}_t^{\epsilon}(x_0)=&\big(b(x_0)-d(x_0)-\al(x_0,x_0)\xi_t^{\epsilon}(x_0)-\al(x_0,x_1)\xi_t^{\epsilon}(x_1)\big)\xi_t^{\epsilon}(x_0)\\
&-\epsilon\xi_t^{\epsilon}(x_0)b(x_0)m(x_0,x_1)+\epsilon\xi_t^{\epsilon}(x_1)b(x_1)m(x_1,x_0),
\eea
\eeq
and
\beq\label{x_1 equation}
\bea
\dot {\xi}_t^{\epsilon}(x_1)=&\big(b(x_1)-d(x_1)-\al(x_1,x_0)\xi_t^{\epsilon}(x_0)-\al(x_1,x_1)\xi_t^{\epsilon}(x_1)\big)\xi_t^{\epsilon}(x_1)\\
&-\epsilon\xi_t^{\epsilon}(x_1)b(x_1)m(x_1,x_0)+\epsilon\xi_t^{\epsilon}(x_0)b(x_0)m(x_0,x_1),
\eea
\eeq
where $\xi_0^{\epsilon}(x_0)=\bar\xi(x_0)$ and $\xi_0^{\epsilon}(x_1)=0$.

 \noindent\textbf{Step 1.} For any fixed $\eta>0$, $\forall \, 0<\epsilon<\eta$, let $T^{\epsilon,1}$ be the time when  $\big(\xi_t^{\epsilon}(x_0),\xi_t^{\epsilon}(x_1)\big)$ leaves the $\epsilon$-neighborhood of $(\bar{\xi}(x_0), 0)$, i.e.  
$$
T^{\epsilon,1}=\inf\big\{t\geq0: \xi_t^{\epsilon}(x_1)>\epsilon, \, \textrm{or}~ \xi_t^{\epsilon}(x_0)<\bar{\xi}(x_0)-\epsilon\big\}.
$$
 From $\eqref{x_1 equation}$ it follows that , for $t<T^{\epsilon,1}$, $\xi_t^{\epsilon}(x_1)$ satisfies the 
following differential inequality:
\beq
\bea
\dot{\xi}_t^{\epsilon}(x_1)\geq&\big(b(x_1)-d(x_1)-\al(x_1,x_0)\bar{\xi}(x_0)-\epsilon\al(x_1,x_1)-\epsilon b(x_1)m(x_1,x_0)\big)\xi_t^{\epsilon}(x_1)\\
&+\epsilon\big(\bar{\xi}(x_0)-\epsilon)b(x_0)m(x_0,x_1)\\
=&\big(f_{1,0}-\epsilon(\al(x_1,x_1)+ b(x_1)m(x_1,x_0))\big)\xi_t^{\epsilon}(x_1)+\epsilon\big(\bar{\xi}(x_0)-\epsilon)b(x_0)m(x_0,x_1).
\eea
\eeq
Since $f_{1,0}=b(x_1)-d(x_1)-\al(x_1,x_0)\bar\xi(x_0)>0$, we can choose $\epsilon$ sufficiently small 
so that the first term on the right hand side of the above inequality is positive. 
Omitting this positive term, 
one sees  that $\check{\xi}_t(x_1)\leq \xi_t^{\epsilon}(x_1)$, where $\check{\xi}_0(x_1)=0$, and
\beq
\dot{\check{\xi}}_t(x_1)=\epsilon \big(\bar{\xi}(x_0)-\epsilon)b(x_0)m(x_0,x_1).
\eeq
Thus, $T^{\epsilon,1}$ can be bounded from above by $\check{T}^{\epsilon,1}=\big((\bar\xi(x_0)-\epsilon)b(x_0)m(x_0,x_1)\big)^{-1}$, which is the time when $\check {\xi}_t(x_1)$ reaches the level  
$\epsilon$-level. Thus,  $T^{\epsilon,1}$ is of order $O(1)$.

\textbf{Step 2.} After  time $T^{\epsilon,1}$, we consider the evolution of the population $\big(\xi_t^{\epsilon}(x_0), \xi_t^{\epsilon}(x_1)\big)$ until the time (denoted  by $T^{\eta,1}$) when it leaves the 
$\eta$-neighborhood of $(\bar{\xi}(x_0), 0)$.
From \eqref{x_1 equation},  omitting the term $\epsilon\xi_t^{\epsilon}(x_0)b(x_0)m(x_0,x_1)$, we get
\beq\label{Invastion_step2_lower bound}
\bea
&\dot{\xi}_t^{\epsilon}(x_1)\\
&\geq \big(b(x_1)-d(x_1)-\al(x_1,x_0)\xi_t^{\epsilon}(x_0)-\al(x_1,x_1)\xi_t^{\epsilon}(x_1)\big)\xi_t^{\epsilon}(x_1)-\epsilon\xi_t^{\epsilon}(x_1)b(x_1)m(x_1,x_0)\\
&\geq\big(b(x_1)-d(x_1)-\al(x_1,x_0)\bar{\xi}(x_0)-\eta\al(x_1,x_1)\big)\xi_t^{\epsilon}(x_1)-\eta\xi_t^{\epsilon}(x_1)b(x_1)m(x_1,x_0)\\
&=\big(f_{1,0}-\eta\check C\big)\xi_t^{\epsilon}(x_1),
\eea
\eeq
where $\check C=\al(x_1,x_1)+b(x_1)m(x_1,x_0)$.
On the other hand, by omitting some negative terms in \eqref{x_1 equation}, we get
\beq\label{Invastion_step2_upper bound}
\bea
\dot{\xi}_t^{\epsilon}(x_1)&\leq \big(b(x_1)-d(x_1)-\al(x_1,x_0)\xi_t^{\epsilon}(x_0)\big)\xi_t^{\epsilon}(x_1)+\epsilon\xi_t^{\epsilon}(x_0)b(x_0)m(x_0,x_1)\\
&\leq \big(b(x_1)-d(x_1)-\al(x_1,x_0)(\bar{\xi}(x_0)-\eta)\big)\xi_t^{\epsilon}(x_1)+\epsilon\bar{\xi}(x_0)b(x_0)m(x_0,x_1)\\
&\leq (f_{1,0}+\eta\hat C)\xi_t^{\epsilon}(x_1),
\eea
\eeq
where $\hat C=\al(x_1,x_0)+\bar{\xi}(x_0)b(x_0)m(x_0,x_1)$.

Applying Gronwall's inequality to \eqref{Invastion_step2_lower bound} and \eqref{Invastion_step2_upper bound}, the density $\xi_t^{\epsilon}(x_1)$, starting with $\xi_{T^{\epsilon,1}}^{\epsilon}
(x_1)=\epsilon$, can be bounded from below by $\check {\xi}_t(x_1)$ and from above by $\hat{\xi}_t(x_1)$, which satisfy the  equations
\beq
\dot{\check {\xi}}_t(x_1)=(f_{1,0}-\check C\eta)\check {\xi}_t(x_1),
\eeq
and
\beq
\dot{\hat {\xi}}_t(x_1)=(f_{1,0}+\hat C\eta)\hat {\xi}_t(x_1),
\eeq
with initial conditions $\check{\xi}_{T^{\epsilon,1}}
(x_1)=\hat{\xi}_{T^ {\epsilon,1}}
(x_1)=\epsilon$, respectively.

The times  
needed for $\check {\xi}_t(x_1)$ and $\hat {\xi}_t(x_1)$ to reach the $\eta$-level can be computed
 explicitly.  They are given by  $\check T^{\eta,1}-T^{\epsilon,1}=(f_{1,0}-\check C\eta)^{-1}\ln\frac{\eta}
{\epsilon}$ and $\hat T^{\eta,1}-T^{\epsilon,1}=(f_{1,0}+\hat C\eta)^{-1}\ln\frac{\eta}{\epsilon}$, 
respectively. 
Since $\hat T^{\eta,1}<T^{\eta,1}<\check T^{\eta,1}$, for any $\eta>0$, $T^{\eta,1}-T^{\epsilon,1}$ is of order $f_{1,0}^{-1}\ln\frac{1}{\epsilon}$.

\textbf{Step 3.} From \eqref{tss_X sequence}, the process $\big(\xi_t^{\epsilon}(x_0), \xi_t^{\epsilon}(x_1)\big)$ starting from time $T^{\eta,1}$
converges, as $\epsilon\to 0$, to the solution of the following system: 
\beq
\begin{cases}
&\dot\xi_t(x_0)=\big(b(x_0)-d(x_0)-\al(x_1,x_0)\xi_t(x_0)-\al(x_1,x_1)\xi_t(x_1)\big)\xi_t(x_0)\\
&\dot\xi_t(x_1)=\big(b(x_1)-d(x_1)-\al(x_1,x_0)\xi_t(x_0)-\al(x_1,x_1)\xi_t(x_1)\big)\xi_t(x_1),
\end{cases}
\eeq
which has a nontrivial initial value $\xi_{T^{\eta,1}}(x_1)=\eta$, and $\xi_{T^{\eta,1}}(x_0)\in(\bar{\xi}(x_0)-\eta, \bar{\xi}(x_0)+\eta)$.
By Lemma \ref{lemma_noncoexistence condition}, this dimorphic system has a unique stable equilibrium $(0,\bar{\xi}(x_1))$ under the assumption that $f_{1,0}>0$, $f_{0,1}<0$.
Let $\widetilde T^{\eta,1}$ be the time when $(\xi_t^{\epsilon}(x_0), \xi_t^{\epsilon}(x_1))$ enters the 
$\eta$-neighborhood of the equilibrium $(0, \bar\xi(x_1))$, i.e. 
$
\xi_{\widetilde T^{\eta,1}}(x_0)=\eta$. Since $\eta$ is a given fixed constant, $\widetilde T^{\eta,1}-T^{\eta,1}$ is of order $O(1)$ as $\epsilon\to 0$.

\textbf{Step 4.} After time $\widetilde T^{\eta,1}$, we consider the time needed 
for $x_1$ to get fixated (i.e. for $x_0$ gets absorbed at  $0$). From \eqref{x_0 equation}, one obtains 
the differential lower bound: 
\beq\label{Invastion_step4_lower bound}
\bea
&\dot{\xi}_t^{\epsilon}(x_0)
\\&\geq \big(b(x_0)-d(x_0)-\al(x_0,x_0)\xi_t^{\epsilon}(x_0)-\al(x_0,x_1)\xi_t^{\epsilon}(x_1)\big)\xi_t^{\epsilon}(x_0)-\epsilon\xi_t^{\epsilon}(x_0)b(x_0)m(x_0,x_1)\\
&\geq\big(b(x_0)-d(x_0)-\eta\al(x_0,x_0)-\al(x_0,x_1)\bar{\xi}(x_1)\big)\xi_t^{\epsilon}(x_0)-\eta b(x_0)m(x_0,x_1)\xi_t^{\epsilon}(x_0)\\
&=\big(f_{0,1}-\eta\check C\big)\xi_t^{\epsilon}(x_0),
\eea
\eeq
where $\check C=\al(x_0,x_0)+b(x_0)m(x_0,x_1)$.
As for the upper bound, we observe that
\beq\label{Invastion_step4_upper bound}
\bea
\dot{\xi}_t^{\epsilon}(x_0)&\leq \big(b(x_0)-d(x_0)-\al(x_0,x_1)\xi_t^{\epsilon}(x_1)\big)\xi_t^{\epsilon}(x_0)+\epsilon\xi_t^{\epsilon}(x_1)b(x_1)m(x_1,x_0)\\
&\leq\big(b(x_0)-d(x_0)-\al(x_0,x_1)(\bar{\xi}(x_1)-\eta)\big)\xi_t^{\epsilon}(x_0)+\epsilon\bar{\xi}(x_1)b(x_1)m(x_1,x_0)\\
&\leq (f_{1,0}+\eta\hat C)\xi_t^{\epsilon}(x_0),
\eea
\eeq
where $\hat C=\al(x_0,x_1)+\bar{\xi}(x_1)b(x_1)m(x_1,x_0)$.

Applying  again Gronwall's inequality to \eqref{Invastion_step4_lower bound} and \eqref{Invastion_step4_upper bound}, we see 
that $\xi_t^{\epsilon}(x_0)$, 
starting with $\xi_{\widetilde T^{\eta,1}}^{\epsilon}(x_0)=\eta$, 
can be bounded from below by $\check{\xi}_t(x_0)$ and from above by 
$\hat{\xi}_t(x_0)$, which satisfy the  equations
\beq\label{Invasion_step4_lower process}
\dot{\check {\xi}}_t(x_0)=(f_{0,1}-\check C\eta)\check {\xi}_t(x_0),
\eeq
and
\beq\label{Invasion_step4_upper process}
\dot{\hat {\xi}}_t(x_0)=(f_{0,1}+\hat C\eta)\hat {\xi}_t(x_0),
\eeq
with $\check{\xi}_{\widetilde T^{\eta,1}}
(x_0)=\hat{\xi}_{\widetilde T^ {\eta,1}}
(x_0)=\eta$.

Since $f_{0,1}=b(x_0)-d(x_0)-\al(x_0,x_1)\bar\xi(x_1)<0$, we can choose $\eta$ small enough so 
that $f_{0,1}+\hat C\eta<0$. Therefore, both $\check{\xi}_t(x_0)$ and $\hat{\xi}_t(x_0)$ decay 
exponentially.
For any $\rho_1>0$, the process $\hat{\xi}_t(x_0)$, in time  of order $\rho_1\ln \frac{\eta}{\epsilon}$, reaches the $\epsilon^{-\rho_1(f_{0,1}+\eta\hat C)}$-neighborhood of $0$, while $\check{\xi}_t(x_0)$ 
reaches   the $\epsilon^{-\rho_1(f_{0,1}-\eta\check C)}$-neighborhood of $0$. 
Let $T^{\epsilon,0}:=\widetilde T^{\eta,1}+\rho_1\ln\frac{\eta}{\epsilon}$. Then, 
\beq
\bea
\lim\limits_{\epsilon\to 0}\hat\xi_{T^{\epsilon,0}}^{\epsilon}(x_0)
 &=\lim\limits_{\epsilon\to 0}\hat\xi_{\widetilde T^{\eta,1}}^{\epsilon}(x_0)
   \exp\left((f_{0,1}+\hat C\eta)(T^{\epsilon,0}-\widetilde T^{\eta,1})\right)\\
 &=\lim\limits_{\epsilon\to 0}\eta\exp\left((f_{0,1}+\hat C\eta)\cdot\rho_1\log\frac{\eta}{\epsilon}\right)\\
 &=\lim\limits_{\epsilon\to 0}\epsilon^{-\rho_1f_{0,1}}\cdot O(\eta)\\
 &=0.
\eea
\eeq
Similarly, we obtain $\lim\limits_{\epsilon\to 0}\check\xi_{T^{\epsilon,0}}^{\epsilon}(x_0)=0$. Therefore, $\lim\limits_{\epsilon\to 0}\xi_{T^{\epsilon,0}}^{\epsilon}(x_0)=0$. Therefore the subpopulation at $x_1$ eventually gets fixated as $\epsilon\to 0$.

Combining the  four steps  above, one concludes that the right time scale for the fitter population 
$x_1$ to get fixated is
\beq
(f_{1,0}^{-1}+\rho_1)\ln\frac{1}{\epsilon}.
\eeq

\textbf{(b)} (Recovery process: see Figure \ref{phase3type}) Next, we consider the case when there are three distinct trait types, 
 $\X=\{x_0,x_1,x_2\}$. At the same time as the population  on the  site  $x_0$ migrates towards 
 the new  site $x_1$ (as shown in (a)), the population of trait $x_1$ can migrate to the site
 $x_2$. Let $\xi_t^{\epsilon}(x_2):=\langle X_t^{\epsilon}, 1_{\{x_2\}}\rangle$. In the following, we re-analyze the evolution process after adding one more trait $x_2$ to the previous case with trait space $\{x_0, x_1\}$. 
Due to an $\epsilon$-fraction of initial migration from the subpopulation $x_0$ to $x_1$  it follows that 
a $\epsilon^2$-fraction migrates from subpopulation $x_1$ to $x_2$.
We have $\xi_{T^{\epsilon, 1}}^{\epsilon}(x_2)=\epsilon\xi_{T^{\epsilon, 1}}^{\epsilon}(x_1)=\epsilon^2\bar\xi(x_0)$.  
Since the population growth of trait $x_2$ is in an exponential rate $b(x_2)-d(x_2)$, the time needed 
for $\xi_t^{\epsilon}(x_2)$, starting with mass of order $\epsilon^2$ to reach the given $\eta$-level, is of order $\frac{2}{b(x_2)-d(x_2)}\ln\frac{1}{\epsilon}$.

As shown in Figure \ref{phase3type}, because of assumption (C3) we have that 
$\frac{2}{b(x_2)-d(x_2)}>\frac{1}{f_{1,0}}$, and the influence of the population at 
$x_2$ is negligible before the  time $T^{\eta,1}$ when the population at  $x_1$ reaches the  level $\eta$. 
Since $\widetilde T^{\eta,1}-T^{\eta,1}=O(1)$, the 
population at $x_2$, starting with $\xi_{\widetilde T^{\eta,1}}^{\epsilon}(x_2)=\epsilon\cdot O(1)$, evolves under the competition from its resident population $x_1$ as follows
\beq\label{x_2 equation}
\bea
\dot {\xi}_t^{\epsilon}(x_2)=&\big(b(x_2)-d(x_2)-\al(x_2,x_1)\xi_t^{\epsilon}(x_1)-\al(x_2,x_2)\xi_t^{\epsilon}(x_2)\big)\xi_t^{\epsilon}(x_2)\\
&-\epsilon\xi_t^{\epsilon}(x_2)b(x_2)m(x_2,x_1)+\epsilon\xi_t^{\epsilon}(x_1)b(x_1)m(x_1,x_2),
\eea
\eeq
where $\xi_{\widetilde T^{\eta,1}}^{\epsilon}(x_1)\in(\bar\xi(x_1)-\eta, \bar\xi(x_1)+\eta)$. On the other hand, until time $\widetilde T^{\eta,1}$ the populations at $x_0$ and $x_1$ still behave the same as in Step 1-Step 4.
Thus, we embed Figure \ref{dynamics_2type} into Figure \ref{phase3type} and continue the proof based on the four-step analysis in (a).

 \begin{figure}[hbtp]
 \centering
 \def\svgwidth{350pt}
 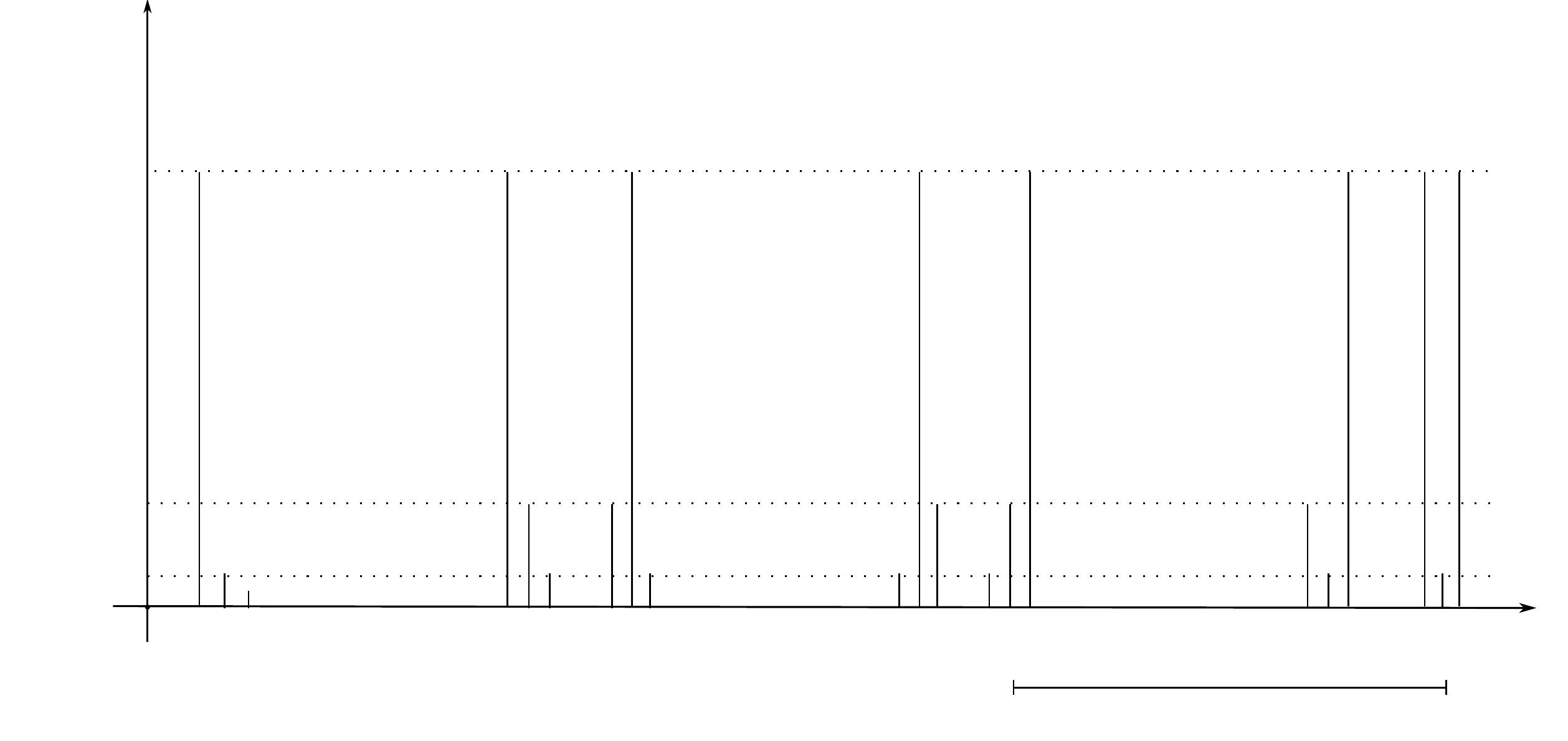
 \caption{{\footnotesize Three-type density evolution ($\xi_t^{\epsilon}(x_0), \xi_t^{\epsilon}(x_1), \xi_t^{\epsilon}(x_2)$)}}.
 \label{phase3type}
 \end{figure}

Let $T^{\eta,2}$ be the first time when $\xi_t^{\epsilon}(x_2)$ enters above the level $\eta$.  
By similar arguments as used in Step 2, one can control $\xi_t^{\epsilon}(x_2)$ by two other curves $\check{\xi}_t(x_2)\leq\xi_t^{\epsilon}(x_2)\leq\hat{\xi}_t(x_2)$ described as follows, for $\widetilde T^{\eta,1}<t<T^{\eta,2}$,
\beq
\dot{\check {\xi}}_t(x_2)=(f_{2,1}-\check C\eta)\check {\xi}_t(x_2),
\eeq
and
\beq
\dot{\hat {\xi}}_t(x_2)=(f_{2,1}+\hat C\eta)\hat {\xi}_t(x_2),
\eeq
where the constants $\check C, \hat C$ change from line to line and $\check{\xi}_{\widetilde T^{\eta,1}}(x_2)=\hat{\xi}_{\widetilde T^{\eta,1}}(x_2)=\epsilon\cdot O(1)$.
It follows that
\beq\label{3typeproof_time estimate}
\frac{1}{f_{2,1}+\hat C\eta}\ln \frac{1}{\epsilon}<T^{\eta,2}-\widetilde T^{\eta,1}<\frac{1}{f_{2,1}-\check C\eta}\ln \frac{1}{\epsilon}.
\eeq
On other other hand, due to the comparison assumption (C3): $\frac{2}{b(x_2)-d(x_2)}>\frac{1}{f_{1,0}}+\frac{1}{f_{2,1}}$, we conclude that the shorter time length among the two for $\xi_t^{\epsilon}(x_2)$ to reach $\eta$-level satisfies
\beq
\left(\frac{1}{f_{1,0}}+\frac{1}{f_{2,1}}-\de\right)\ln\frac{1}{\epsilon}<T^{\eta,2}
 <\left(\frac{1}{f_{1,0}}+\frac{1}{f_{2,1}}+\de\right)\ln\frac{1}{\epsilon}.
\eeq

During time interval $[\widetilde T^{\eta,1}, T^{\eta,2}]$, consider the population $\xi_t^{\epsilon}(x_0)$. We inherit the estimate $\check\xi_t(x_0)<\xi_t^{\epsilon}(x_0)<\hat\xi_t(x_0)$ from Step 4. Subpopulations $\check\xi_t(x_0)$ and $\hat\xi_t(x_0)$ are described by solutions of two equations \eqref{Invasion_step4_lower process} and \eqref{Invasion_step4_upper process}, which imply that

\beq
\check\xi_t(x_0)=\check\xi_{\widetilde T^{\eta,1}}(x_0)e^{(f_{0,1}-\check C\eta)(t-\widetilde T^{\eta,1})},
\eeq
and
\beq
\hat\xi_t(x_0)=\hat\xi_{\widetilde T^{\eta,1}}(x_0)e^{(f_{0,1}+\hat C\eta)(t-\widetilde T^{\eta,1})}
\eeq
with
$\check{\xi}_{\widetilde T^{\eta,1}}
(x_0)=\hat{\xi}_{\widetilde T^ {\eta,1}}
(x_0)=\eta$.

Combining above with \eqref{3typeproof_time estimate}, one obtains
\beq
\eta\epsilon^{-\frac{f_{0,1}-\check C\eta}{f_{2,1}-\check C\eta}}<\check\xi_{T^{\eta,2}}(x_0)<\xi_{T^{\eta,2}}^{\epsilon}(x_0)
<\hat\xi_{T^{\eta,2}}(x_0)<\eta\epsilon^{-\frac{f_{0,1}+\hat{C}\eta}{f_{2,1}+\hat C\eta}}.
\eeq

Taking $\epsilon$-migration from its neighbor site $x_1$ into account, the mass on $x_0$ is of order $\epsilon^{\frac{|f_{0,1}|}{f_{2,1}}}\vee\epsilon$. Due to assumption $(C4):\frac{|f_{0,1}|}{f_{2,1}}<1$, one obtains that $\epsilon^{\frac{|f_{0,1}|}{f_{2,1}}}\vee\epsilon=\epsilon^{\frac{|f_{0,1}|}{f_{2,1}}}$.

Short after $T^{\eta,2}$, as $\epsilon$ tends to 0, there follows an immediate swap between $\xi_t^{\epsilon}(x_1)$ and $\xi_t^{\epsilon}(x_2)$ approximated by a Lotka-Volterra system as in Step 3. Denote 
 by $\widetilde T^{\eta,2}$ the first time when $(\xi_t^{\epsilon}(x_1),\xi_t^{\epsilon}(x_2))$ enters the 
$\eta$-neighborhood of the equilibrium $(0,\bar{\xi}(x_2))$, 
i.e. $\xi_{\widetilde T^{\eta,2}}^{\epsilon}(x_1)=\eta$. Also, $\widetilde T^{\eta,2}-T^{\eta,2}$ is of order $O(1)$. Thus, one obtains
\beq\label{3typeproof_rebirth initial estimate}
\epsilon^{-\frac{f_{0,1}-\check C\eta}{f_{2,1}-\check C\eta}}\cdot O(\eta)<\xi_{\widetilde T^{\eta,2}}^{\epsilon}(x_0)=\xi_{T^{\eta,2}}^{\epsilon}(x_0)\cdot O(1)<\epsilon^{-\frac{f_{0,1}+\hat{C}\eta}{f_{2,1}+\hat C\eta}}\cdot O(\eta).
\eeq
Let $T^{\eta,0}$ denote the first time  after time $\widetilde T^{\eta,2}$ when $\xi_t^{\epsilon}(x_0)$ 
reaches $\eta$-level. For $\widetilde T^{\eta,2}<t<T^{\eta,0}$, $\xi_t^{\epsilon}(x_0)$ 
is governed approximately by a logistic equation
\beq
\dot{\xi}_t(x_0)=(b(x_0)-d(x_0)-\al(x_0,x_0)\xi_t(x_0))\xi_t(x_0).
\eeq
Then, we have the differential inequality
\beq
(b(x_0)-d(x_0)-\al(x_0,x_0)\eta)\xi_t(x_0)<\dot{\xi}_t(x_0)<(b(x_0)-d(x_0))\xi_t(x_0),
\eeq
where $\xi_{\widetilde T^{\eta,2}}(x_0)$ satisfies \eqref{3typeproof_rebirth initial estimate}. Then, by Gronwall's inequality, one obtains
\beq\label{3typeproof_rebirth time}
\bea
-\frac{f_{0,1}+\hat{C}\eta}{(f_{2,1}+\hat C\eta)(b(x_0)-d(x_0))}\ln\frac{1}{\epsilon}&<T^{\eta,0}-\widetilde T^{\eta,2}\\
&<-\frac{f_{0,1}-\check C\eta}{(f_{2,1}-\check C\eta)(b(x_0)-d(x_0)-\al(x_0,x_0)\eta)}\ln\frac{1}{\epsilon}.
\eea
\eeq
After $T^{\eta,0}$, $\xi_t^{\epsilon}(x_0)$ approaches $\bar\xi(x_0)$ in time length of order 1.

Combining the analysis on $T^{\eta,1}$ in Step 2, and the estimates in \eqref{3typeproof_time estimate} and \eqref{3typeproof_rebirth time}, since  $\eta>0$ is arbitrary, we obtain that 
\beq\label{3typeproof_multi times}
\bea
&\lim\limits_{\eta\to 0}\frac{T^{\eta,1}}{\ln\frac{1}{\epsilon}}=\frac{1}{f_{1,0}}=:I_1,\\
&\lim\limits_{\eta\to 0}\frac{T^{\eta,2}-\widetilde T^{\eta,1}}{\ln\frac{1}{\epsilon}}=\frac{1}{f_{2,1}}=:I_2-I_1,\\
&\lim\limits_{\eta\to 0}\frac{T^{\eta,0}-\widetilde T^{\eta,2}}{\ln\frac{1}{\epsilon}}=\frac{-f_{0,1}}{f_{2,1}(b(x_0)-d(x_0))}=:S_0.
\eea
\eeq
Therefore, $X_t^{\epsilon}$, rescaled on a time scale of order $\ln\frac{1}{\epsilon}$, converges to the TST process $U_t$ $(L=2)$ with the form \eqref{tss_3type}.
\end{proof}

\begin{prop}\label{TSS_4TYPE}
Admit the same conditions as in Theorem \ref{TST_deterministic}. Consider the case when $L=3$, i.e. $\X=\{x_0, x_1, x_2, x_3\}$. Then the limit process $(U_t)_{t\geq 0}$ has the form
\beq\label{tss_4type}
U_t\equiv
\left\{
  \begin{array}{ll}
    \bar{\xi}(x_0)\de_{x_0} & \textrm{for}~ 0\leq t\leq I_1, \\
    \bar{\xi}(x_1)\de_{x_1} & \textrm{for}~ I_1< t\leq I_2, \\
    \bar{\xi}(x_2)\de_{x_2} & \textrm{for}~ I_2< t\leq I_3, \\
    \bar{\xi}(x_3)\de_{x_3} & \textrm{for}~ I_3< t\leq I_3+S_1,\\
     \bar{\xi}(x_1)\de_{x_1}+\bar{\xi}(x_3)\de_{x_3} & \textrm{for}~ t> I_3+S_1
  \end{array}
\right.
\eeq
where $I_3=\tfrac{1}{f_{1,0}}+\tfrac{1}{f_{2,1}}+\tfrac{1}{f_{3,2}}$, and $S_1=\frac{|f_{1,2}|}{f_{3,2}(b(x_1)-d(x_1))}$
\end{prop}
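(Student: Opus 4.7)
The plan is to extend the four-step invasion analysis from the proof of Proposition \ref{TSS_3TYPE} by iterating it across the successive neighboring pairs $(x_0,x_1)$, $(x_1,x_2)$, $(x_2,x_3)$, and then to carry out a more delicate recovery analysis in which \emph{two} dormant sub-populations, $x_0$ and $x_1$, are simultaneously released from competitive suppression when $x_3$ fixates. The conclusion should then be that $x_1$ recovers first, reaches its monomorphic equilibrium $\bar\xi(x_1)$, and thereafter holds $x_0$ down via nearest-neighbor competition, giving the final configuration $\bar\xi(x_1)\delta_{x_1}+\bar\xi(x_3)\delta_{x_3}$.

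For the invasion phase, I would iterate the same four-step template (spreading of migration mass to level $\epsilon$; exponential growth at rate $f_{k+1,k}$ from $\epsilon$ up to $\eta$ in time $f_{k+1,k}^{-1}\ln(\eta/\epsilon)$; $O(1)$ Lotka-Volterra swap via Lemma \ref{lemma_noncoexistence condition}; exponential decay of the previous resident) three times in succession. Assumption (C3), applied at $i=2$ and $i=3$, guarantees that the direct exponential growth of mass at $x_2$ (initially of order $\epsilon^2$) and at $x_3$ (initially of order $\epsilon^3$) cannot reach level $\eta$ before the sequential invasions up to $x_2$ and $x_3$ respectively do so. This yields the strict ordering of invasion times $I_1<I_2<I_3$ and the corresponding monomorphic stages up to time $I_3\ln(1/\epsilon)$, exactly as in \eqref{tss_4type}.

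For the recovery phase, I would track the sizes of the dormant populations at $x_0$, $x_1$, $x_2$ through these invasions. Arguing as in the last part of Proposition \ref{TSS_3TYPE} (the computation producing $S_0$), once $x_3$ dominates at level $\bar\xi(x_3)$, the competitive pressure on $x_1$ from $x_2$ vanishes while $x_2$ is itself exponentially crushed by $x_3$; hence $x_1$ behaves approximately logistically and recovers in time $S_1\ln(1/\epsilon)=\tfrac{|f_{1,2}|}{f_{3,2}(b(x_1)-d(x_1))}\ln(1/\epsilon)$. The analogous calculation for $x_0$ produces the would-be recovery time $\tfrac{|f_{0,1}|}{f_{2,1}(b(x_0)-d(x_0))}\ln(1/\epsilon)$ (before its growth is halted by $x_1$ at level $\bar\xi(x_1)$). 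Assumption (C4)---which will be packaged in Lemma \ref{Lem:C4}---is designed to make the latter strictly larger than $S_1$, so that $x_1$ reaches $\bar\xi(x_1)$ first. At this moment $f_{0,1}<0$ becomes active, and $x_0$ is driven back to negligible mass by exactly the Step 4 argument from the proof of Proposition \ref{TSS_3TYPE}.

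The main obstacle is the careful bookkeeping of the orders $\epsilon^{\gamma}$ attained by the dormant populations after each invasion, and the justification that during the recovery window no undesired crossing of the $\eta$-threshold occurs: in particular, one must verify that $\xi^\epsilon_t(x_0)$ does not reach $\eta$ before $\xi^\epsilon_t(x_1)$ does, which is precisely the content of the inequality in (C4). Once this comparison is in place, taking $\eta\to 0$ after $\epsilon\to 0$ collapses the $O(1)$ transition intervals and yields the four jump times $I_1,I_2,I_3,I_3+S_1$ in the rescaled limit, proving \eqref{tss_4type}.
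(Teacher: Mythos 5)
Your proposal follows essentially the same route as the paper: iterated four-step invasions whose ordering is controlled by (C3), followed by a recovery race between the dormant populations at $x_0$ and $x_1$ that is decided by (C4), after which $x_1$ suppresses $x_0$. The only imprecision is that the relevant comparison is not $S_0>S_1$ but $S_0>\tfrac{1}{f_{3,2}}+S_1$, since the two recovery clocks start at $\widetilde T^{\eta,2}$ and $\widetilde T^{\eta,3}$ respectively, offset by $\sim f_{3,2}^{-1}\ln\tfrac{1}{\epsilon}$ --- which is exactly the form of the inequality in (C4) that you defer to, so no genuine gap remains.
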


\begin{proof}
(See Figure \ref{phase4type}) 
Due to an $\epsilon$-fraction of migration from a subpopulation to its 
neighbor subpopulation it follows that $\xi_{T^{\epsilon,1}}^{\epsilon}(x_3)=\epsilon\xi_{T^{\epsilon,1}}^{\epsilon}(x_2)=\epsilon^2\xi_{T^{\epsilon,1}}^{\epsilon}(x_1)=\epsilon^3\bar\xi(x_0)$.
Since the density of trait type $x_3$ increases in an exponential speed $b(x_3)-d(x_3)$, the time 
needed to reach a level  $\eta$-level is of order $\frac{3}{b(x_3)-d(x_3)}\ln \frac{1}{\epsilon}$.
By assumption (C3): $\frac{3}{b(x_3)-d(x_3)}>\frac{1}{f_{1,0}}+\frac{1}{f_{2,1}}$, it implies that the 
population on trait site $x_3$ is negligible, i.e. of order $\epsilon$, before the time  $T^{\eta,2}$. 
Thus until that time the evolution of the populations at the other sites preoceeds as if this site did not exist.
 The  analysis and notations such as $T^{\eta,1}, \widetilde T^{\eta,1}$, $T^{\eta,2}, \widetilde T^{\eta,2}$ then 
carry over from the proof of Proposition \ref{TSS_3TYPE}.

 \begin{figure}[hbtp]
 \centering
 \def\svgwidth{350pt}
 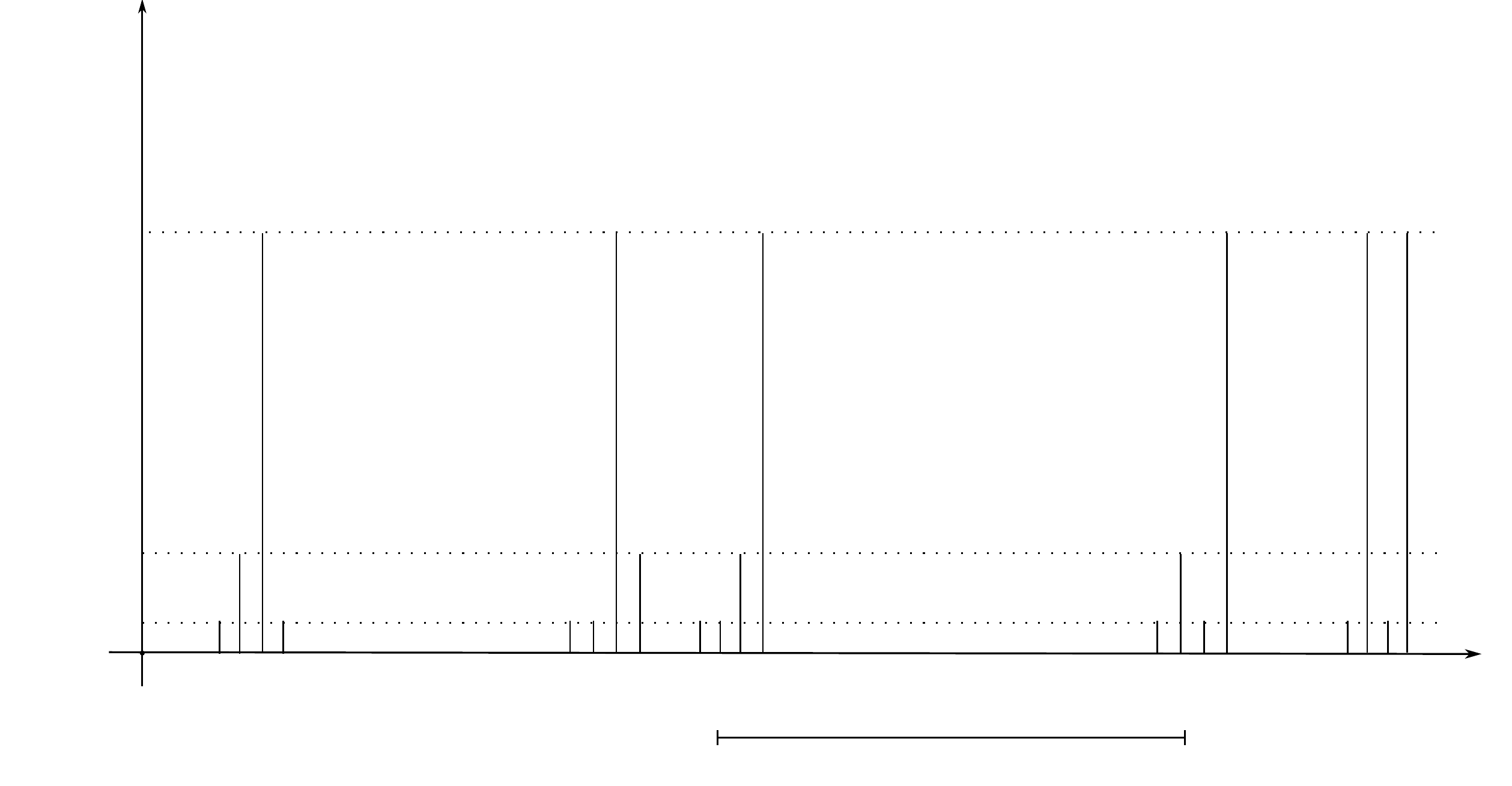
 \caption{{\footnotesize Four-type density evolution ($\xi_t^{\epsilon}(x_0), \xi_t^{\epsilon}(x_1), \xi_t^{\epsilon}(x_2), \xi_t^{\epsilon}(x_3)$) }}.
 \label{phase4type}
 \end{figure}

The evolution of $ \dot {\xi}_t^{\epsilon}(x_3)$ after time $\widetilde T^{\eta,2}$ is governed by the equation
\beq
\bea
\dot {\xi}_t^{\epsilon}(x_3)=&\big(b(x_3)-d(x_3)-\al(x_3,x_2)\xi_t^{\epsilon}(x_2)-\al(x_3,x_3)\xi_t^{\epsilon}(x_3)\big)\xi_t^{\epsilon}(x_3)\\
&-\epsilon\xi_t^{\epsilon}(x_3)b(x_3)m(x_3,x_2)+\epsilon\xi_t^{\epsilon}(x_2)b(x_2)m(x_2,x_3),
\eea
\eeq
with initial value $\xi_{\widetilde T^{\eta,2}}^{\epsilon}(x_3)=\epsilon\cdot O(1)$.

Let $T^{\eta,0}$ and $T^{\eta,3}$ be the first time (resp.) for $\xi_t^{\epsilon}(x_0)$ and $\xi_t^{\epsilon}(x_3)$ to reach $\eta$-level after $\widetilde T^{\eta,2}$.  Similarly as in the derivation of Eq. \eqref{3typeproof_time estimate}, we get
\beq\label{4ypeproof_estimate eta level}
T^{\eta,3}-\widetilde T^{\eta,2}\sim \frac{1}{f_{3,2}}\ln\frac{1}{\epsilon}
\eeq
where  $f(\epsilon)\sim g(\epsilon)$ means $\lim\limits_{\epsilon\to 0}f(\epsilon)/g(\epsilon)=1$.

Recall from \eqref{3typeproof_rebirth time} that
\beq\label{4typeproof_x0 rebirth}
T^{\eta,0}-\widetilde T^{\eta,2}\sim \frac{-f_{0,1}}{f_{2,1}(b(x_0)-d(x_0))}\ln\frac{1}{\epsilon}.
\eeq
From assumption (C4), one obtains that
\beq\label{4typeproof_rebirth time comparison}
\frac{-f_{0,1}}{f_{2,1}(b(x_0)-d(x_0))}-\frac{1}{f_{3,2}}>\frac{-f_{1,2}}{f_{3,2}(b(x_1)-d(x_1))}>0,
\eeq
which implies that $T^{\eta,3} < T^{\eta,0}$. 
Hence, for $t\in[\widetilde T^{\eta,2}, T^{\eta,3}]$, the population at site $x_0$, $\xi_t^{\epsilon}(x_0)$, 
stays in some small $\epsilon$-dependent   neighborhood of 0. Furthermore, $\xi_t^{\epsilon}(x_1)$ is influenced mainly from the competition with  $\bar\xi(x_2)$. Using comparison arguments as above,  
one derives that at time $T^{\eta,3}$
\beq
\bea
\xi_{T^{\eta,3}}^{\epsilon}(x_1)&=\xi_{\widetilde T^{\eta,2}}^{\epsilon}(x_1)e^{f_{1,2}(T^{\eta,3}-\widetilde T^{\eta,2})}\\
&\sim\eta e^{f_{1,2}f_{3,2}^{-1}\ln\frac{1}{\epsilon}}\\
&\sim\eta\epsilon^{-\frac{f_{1,2}}{f_{3,2}}}.
\eea
\eeq
Similarly as in Step 3,  after the time $T^{\eta,3}$, $\xi_t^{\epsilon}(x_2)$ and $\xi_t^{\epsilon}(x_3)$ 
swap their mass   in a time of order 1, and $\xi_t^{\epsilon}(x_2)$ decreases below  a level  $\eta$ at time $\widetilde T^{\eta,3}$. After time $\widetilde T^{\eta,3}$, $\xi_t^{\epsilon}(x_1)$ evolves approximately as a logistic growth curve since there is only negligible competition from neighbors $x_0, x_2$, i.e.
\beq
\dot{\xi}_t^{\epsilon}(x_1)=(b(x_1)-d(x_1)-\al(x_1,x_1)\xi_t^{\epsilon}(x_1))\xi_t^{\epsilon}(x_1),
\eeq
with initial value $\xi_{\widetilde T^{\eta,3}}^{\epsilon}(x_1)\sim\eta\epsilon^{-\frac{f_{1,2}}{f_{3,2}}}$.

Denote by $T^{\eta,1,1}$ the second time for the population at site  $x_1$ to increase to 
 the level $\eta$. Because of the exponential growth property, as in the derivation of 
 \eqref{4typeproof_x0 rebirth}, one obtains
\beq\label{4ypeproof_x1 rebirth}
T^{\eta,1,1}-\widetilde T^{\eta,3}\sim\frac{-f_{1,2}}{f_{3,2}(b(x_1)-d(x_1))}\ln\frac{1}{\epsilon}.
\eeq
Comparing the times computed above  with the estimate \eqref{4typeproof_rebirth time comparison} on 
$T^{\eta,0}$, one observes that $T^{\eta,1,1}<T^{\eta,0}$.  This means that $\xi_t^{\epsilon}(x_1)$ 
recovers to level  $\eta$ faster than $\xi_t^{\epsilon}(x_0)$ did. Consequently, $\xi_t^{\epsilon}(x_0)$ will 
be pushed  to 0 due to competition from the fitter type $x_1$.

Combining \eqref{4ypeproof_estimate eta level}, \eqref{4ypeproof_x1 rebirth} and the first two equations in 
\eqref{3typeproof_multi times}, we obtain the claimed form of  the TST limiting configuration $(U_t)$ for $L=3$.
\end{proof}

\begin{lem}\label{Lem:C4}
Assumption (C4) implies the following inequalities, for any $4\leq L\in\N$
\beq\label{assumption C4_1}
\bea
&\frac{-f_{0,1}}{f_{2,1}(b(x_0)-d(x_0))}>\frac{1}{f_{3,2}}+\cdots+\frac{1}{f_{L,L-1}},\\
&\frac{-f_{1,2}}{f_{3,2}(b(x_1)-d(x_1))}>\frac{1}{f_{4,3}}+\cdots+\frac{1}{f_{L,L-1}},\\
\vdots\\
&\frac{-f_{L-3,L-2}}{f_{L-1,L-2}(b(x_{L-3})-d(x_{L-3}))}>\frac{1}{f_{L,L-1}}
\eea
\eeq
and
\beq\label{assumption C4_2}
\bea
&\frac{-f_{L-4,L-3}}{f_{L-2,L-3}(b(x_{L-4})-d(x_{L-4}))}-\frac{1}{f_{L-1,L-2}}-\frac{1}{f_{L,L-1}}
>\frac{-f_{L-2,L-1}}{f_{L,L-1}(b(x_{L-2})-d(x_{L-2}))},\\
\vdots\\
&\frac{-f_{0,1}}{f_{2,1}(b(x_{0})-d(x_{0}))}-\frac{1}{f_{3,2}}-\frac{1}{f_{4,3}}
>\frac{-f_{2,3}}{f_{4,3}(b(x_{2})-d(x_{2}))}
\eea
\eeq
and so on.
\end{lem}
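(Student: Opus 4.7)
The proposal is to introduce the shorthand
\beq
R_i \defi \frac{|f_{i,i+1}|}{f_{i+2,i+1}\bigl(b(x_i)-d(x_i)\bigr)}, \qquad i\geq 0,
\eeq
so that assumption (C4) can be rewritten in the compact one-step recursive form
\beq\label{C4rewrite}
R_i > R_{i+1} + \frac{1}{f_{i+3,i+2}}, \qquad i \geq 0.
\eeq
Note that $R_i > 0$ for every $i$ by (C2), since $|f_{i,i+1}|$, $f_{i+2,i+1}$ and $b(x_i)-d(x_i)$ are all strictly positive. The whole lemma will follow by iterating \eqref{C4rewrite} and keeping careful track of the indices.

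The first step is to prove, by induction on $k\geq 0$, the telescoped inequality
\beq\label{telescope}
R_i > R_{i+k} + \sum_{j=0}^{k-1}\frac{1}{f_{i+3+j,\,i+2+j}}.
\eeq
The base case $k=0$ is trivial, and the inductive step is immediate from \eqref{C4rewrite} applied at level $i+k$. Both families of inequalities in the lemma will then be read off from \eqref{telescope} by choosing the right values of $k$.

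For the family \eqref{assumption C4_1}, I consider the inequality indexed by some $k_0\in\{0,1,\dots,L-3\}$, namely
\beq
R_{k_0} > \frac{1}{f_{k_0+3,k_0+2}} + \cdots + \frac{1}{f_{L,L-1}}.
\eeq
This is exactly \eqref{telescope} applied with $i=k_0$ and $k=L-2-k_0$, after discarding the positive term $R_{L-2}$ on the right. For the family \eqref{assumption C4_2}, which is structurally of the form
\beq
R_{k_0} - \frac{1}{f_{k_0+3,k_0+2}} - \frac{1}{f_{k_0+4,k_0+3}} > R_{k_0+2},
\eeq
I use \eqref{telescope} with $i=k_0$ and $k=2$; this is precisely the displayed statement, now with \emph{equality of structure} rather than an inequality to be discarded.

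The argument is purely combinatorial, reducing to telescoping \eqref{C4rewrite}, so no analytic obstacle is expected; the main point to be careful about is keeping the indices of $f_{\cdot,\cdot}$ consistent with the conventions in (C2) and (C4). The part (C4)(a), $|f_{i,i+1}|/f_{i+2,i+1}<1$, is not needed for this lemma itself, but it was already used in Proposition \ref{TSS_3TYPE} to ensure that after recovery the dominant contribution to the mass at $x_i$ is the one coming from cross-competition (of order $\epsilon^{|f_{i,i+1}|/f_{i+2,i+1}}$) rather than from the residual $\epsilon$-migration; so this lemma and (C4)(a) together encode the full scheduling information needed for the recovery analysis in Section \ref{section 2.5.1}.
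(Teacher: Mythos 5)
Your proof is correct and is exactly the telescoping iteration of (C4) that the paper itself invokes: the paper gives no written argument beyond stating that the lemma ``follows iterations straightforward from assumption (C4)'', and your recursive form $R_i>R_{i+1}+\tfrac{1}{f_{i+3,i+2}}$ together with the telescoped inequality is the natural way to make that precise, with all indices checking out. Two cosmetic points: the base case of your induction should be $k=1$ (which is (C4) itself) rather than $k=0$, where the strict inequality $R_i>R_i$ fails; and the positivity of $b(x_i)-d(x_i)$ comes from (A2), not (C2).
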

The proof of this Lemma follows iterations straightforward from assumption (C4). On the one hand, from \eqref{assumption C4_1}, it implies that when it passes to the limit process $U_t$, all processes except $\xi_t^{\epsilon}(x_L)$ stay in $\epsilon$-dependent infinitesimal neighborhoods of 0 at time $T^{\eta,L}$ which denotes the establishing time for type $x_L$. It leads to monomorphic transportation of the mass from the initial trait $x_0$ to the fittest trait $x_L$ in the first half period. On the other hand, from \eqref{assumption C4_2}, it guarantees that the fitter one recovers earlier than the unfit traits alternatively backwards to the most unfit one.

\begin{proof}[Proof of Theorem \ref{TST_deterministic}]
After justifying the form of $(U_t)_{t\geq 0}$ for $L=2$ in Proposition \ref{TSS_3TYPE} and $L=3$ in Proposition \ref{TSS_4TYPE}, we proceed the proof along two lines, according to whether $L$ is an even or odd integer. We here only prove cases along the line when $L$ is a even integer.

We now deduce the expression for $(U_t)_{t\geq 0}$ when $L=4$ based on the result for $L=2$ (see Remark \ref{Remark:TST_finite} (2) and Figure \ref{treeconstruction}). 

Based on the analysis in the proof of Proposition \ref{TSS_4TYPE}, after time $\widetilde T^{\eta,3}$, 
we introduce $T^{\eta,4}$ which is defined as the first time for $\xi_t^{\epsilon}(x_4)$ to reach the $\eta$-level. Similarly as before, we can show that
\beq
T^{\eta,4}-\widetilde T^{\eta,3}\sim \frac{1}{f_{4,3}}\ln\frac{1}{\epsilon}.
\eeq

Then, after time $T^{\eta,4}$, to mimic Step 3 in the proof of Proposition \ref{TSS_3TYPE}, it follows with a selective sweep between subpopulation $x_3§$ and $x_4$
until time  $\widetilde T^{\eta,4}$ such that $\xi_{\widetilde T^{\eta,4}}^{\epsilon}(x_3)=\eta$. 

At this stage, 
 $\xi_t^{\epsilon}(x_2)$ starts to recover due to the lack of competition from 
$\xi_t^{\epsilon}(x_3)$. Similarly as in the derivation of \eqref{4ypeproof_x1 rebirth}, the time  needed for $\xi_t^{\epsilon}(x_2)$ to again reach 
the $\eta$-level (denoted by $T^{\eta,2,2}$) can be computed explicitly
\beq\label{Proof:TSS_finite-X_2recovertime}
T^{\eta,2,2}-\widetilde T^{\eta,4}\sim\frac{-f_{2,3}}{f_{4,3}(b(x_2)-d(x_2))}\ln\frac{1}{\epsilon}=:S_2\ln\frac{1}{\epsilon}
\eeq
which, due to $\widetilde T^{\eta,4}\sim I_4\ln\frac{1}{\epsilon}$, implies
\beq
T^{\eta,2,2}\sim (I_4+S_2)\ln\frac{1}{\epsilon}.
\eeq
After that, it will approach its equilibrium $\bar\xi(x_2)$ as a solution of a logistic equation.
Consequently, $\xi_t^{\epsilon}(x_1)$ will drift to 0 due to competition from its fitter neighbor-trait $x_2$.

Recall from \eqref{4typeproof_x0 rebirth} that
\beq\label{Proof:TSS_finite-X_0recovertime}
\bea
T^{\eta,0}-\widetilde T^{\eta,4}&
                                  \sim\Big[\frac{-f_{0,1}}{f_{2,1}(b(x_0)-d(x_0))}-\frac{1}{f_{3,2}}-\frac{1}{f_{4,3}}\Big]\ln\frac{1}{\epsilon}\\
                                &=:(S_0+I_2-I_4)\ln\frac{1}{\epsilon}
\eea
\eeq
which implies
\beq
T^{\eta,0}\sim (I_2+S_0)\ln\frac{1}{\epsilon}.
\eeq
Combining the above two estimates \eqref{Proof:TSS_finite-X_2recovertime} and \eqref{Proof:TSS_finite-X_0recovertime} with assumption \eqref{assumption C4_2}, one observes that
\beq
T^{\eta,2,2}-\widetilde T^{\eta,4}<T^{\eta,0}-\widetilde T^{\eta,4}.
\eeq
Moreover, we have
\beq
\bea
\lim\limits_{\eta\to 0}\frac{T^{\eta,2,2}}{\ln\frac{1}{\epsilon}}&=\frac{1}{f_{1,0}}+\frac{1}{f_{2,1}}+\frac{1}{f_{3,2}}+\frac{1}{f_{4,3}}
+\frac{|f_{2,3}|}{f_{4,3}(b(x_2)-d(x_2))}\\
&=I_4+S_2,
\eea
\eeq
and
\beq
\lim\limits_{\eta\to 0}\frac{T^{\eta,0}}{\ln\frac{1}{\epsilon}}=I_2+S_0.
\eeq
We thus obtain the explicit form of \eqref{tss_U_t_even} for $L=4$
\beq\label{tss_5type}
U_t\equiv
\left\{
  \begin{array}{ll}
    \bar{\xi}(x_0)\de_{x_0} & \textrm{for}~ 0\leq t\leq I_1, \\
    \bar{\xi}(x_1)\de_{x_1} & \textrm{for}~ I_1< t\leq I_2, \\
    \bar{\xi}(x_2)\de_{x_2} & \textrm{for}~ I_2< t\leq I_3, \\
    \bar{\xi}(x_3)\de_{x_3} & \textrm{for}~ I_3< t\leq I_4, \\
    \bar{\xi}(x_4)\de_{x_4} & \textrm{for}~ I_4< t\leq I_4+S_2,\\
    \bar{\xi}(x_2)\de_{x_2}+\bar{\xi}(x_4)\de_{x_4} & \textrm{for}~I_4+S_2< t\leq I_2+S_0,\\
     \bar{\xi}(x_0)\de_{x_0}+\bar{\xi}(x_2)\de_{x_2}+\bar{\xi}(x_4)\de_{x_4} & \textrm{for}~ t> I_2+S_0
  \end{array}
\right.
\eeq
which is constructed on top of $U_t$ for $L=2$ (see \eqref{tss_3type}) by partitioning the interval $(I_2, I_2+S_0]$ into intervals $(I_2, I_3]\cup (I_3, I_4]\cup (I_4, I_4+S_2]\cup (I_4+S_2, I_2+S_0]$.

To mimic a similar procedure, a TST process for $L=6$ can be obtained by connecting the TST process for $L=4$ with a sub-TST consisting of traits $\{x_4, x_5, x_6\}$ specified as in Proposition \ref{TSS_3TYPE}. 

Recursively, for all even integers $L$, the form of \eqref{tss_U_t_even} follows. 
\end{proof}

\subsection{Proof of Theorem \ref{TST_theorem_infinite trait}}\label{section 2.5.2}

To prove the Theorem \ref{TST_theorem_infinite trait}, we proceeds
by listing two key lemmas. In the first one, we conclude that the occurrence time of each successive mutation is asymptotically characterized by exponential distribution
while rescaling time in an appropriate way. 
In the second lemma, we justify that a one-step transition from a current configuration to a new one is in probability one as the migration rate $\epsilon$ tends to 0. To avoid repeating arguments, we here only prove the first case of Definition \ref{TST_Definition_infinite tree} while the second one follows a same fashion. The proof is carried out by the method of mathematical induction.

For any non-negative integer $l$, denote by $\Gamma^{(2l)}$ the atomic measure with finite support, i.e., $\Gamma^{(2l)}=\sum_{i=0}^l\bar\xi(x_{2i}^{(2l)})\delta_{x_{2i}^{(2l)}}$. Similarly, set $\Gamma^{(2l+1)}=\sum_{i=1}^{l+1}\bar\xi(x_{2i-1}^{(2l+1)})\delta_{x_{2i-1}^{(2l+1)}}$, whose form is described as in Definition \ref{TST_Definition_infinite tree}.
As for the transition from $\Gamma^{(0)}$ to $\Gamma^{(1)}$, it is trivial to be proved as in Proposition \ref{TSS_3TYPE} (a). To the end, it remains to show that the transition rule also holds from configuration $\Gamma^{(2l)}$ to $\Gamma^{(2l+1)}$ for any $l\in\N$. Denote by $\PP_{\Gamma^{(2l)}}$ the law of the process $X^{\epsilon,\sigma}_{\cdot}$ with initial configuration $\Gamma^{(2l)}$.
Denote by $\tau^{\epsilon}$ the first time after 0 when there occurs a new mutation event.

\begin{lem}\label{Lemma_exp time in TST}
Admit the same conditions as in Theorem \ref{TST_theorem_infinite trait}.
\beq
\lim\limits_{\epsilon\to 0} \PP_{\Gamma^{(2l)}}\big(\tau^{\epsilon}>\frac{t}{\sigma}\big)=\exp\Big(-t\sum_{i=0}^l\bar\xi(x_{2i}^{(2l)})\mu(x_{2i}^{(2l)})\Big).
\eeq
\end{lem}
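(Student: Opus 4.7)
The starting observation is that, between mutation events, the process $X^{\epsilon,\sigma}$ evolves deterministically: the first two terms of the generator $L^{\epsilon,\sigma}$ in \eqref{generator_X_epsilon,sigma} involve only the Gateaux derivative $\delta F/\delta \nu$, so they generate a deterministic flow on $\M_F(\X)$, while only the third term is a genuine jump. Mutation is therefore the sole source of randomness, and conditional on the sample path $(X^{\epsilon,\sigma}_u)_{u\geq 0}$ before $\tau^{\epsilon}$, the first mutation time is the first arrival of an inhomogeneous Poisson process whose instantaneous rate at time $s$ is $\sigma\langle X^{\epsilon,\sigma}_s,\mu\rangle$. Hence
\beq
\PP_{\Gamma^{(2l)}}\bigl(\tau^{\epsilon}>s\bigr)=\exp\Bigl(-\sigma\int_0^s\langle X^{\epsilon,\sigma}_u,\mu\rangle\,du\Bigr).
\eeq

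The key simplification is that $\Gamma^{(2l)}$ is an exact fixed point of the deterministic flow, so $X^{\epsilon,\sigma}_s\equiv\Gamma^{(2l)}$ for every $s\in[0,\tau^{\epsilon})$. For the logistic-competition term, assumption (D2) gives $\al(x_{2i}^{(2l)},x_{2j}^{(2l)})=0$ whenever $i\neq j$, since the $x_{2i}^{(2l)}$ are non-adjacent in the $\prec$-ordering; each supporting coordinate thus satisfies the scalar logistic equation $\dot u=u\bigl(b(x_{2i}^{(2l)})-d(x_{2i}^{(2l)})-\al(x_{2i}^{(2l)},x_{2i}^{(2l)})u\bigr)$, which is already at its nontrivial equilibrium $\bar\xi(x_{2i}^{(2l)})$. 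For the migration term, the same condition forces $m(x_{2i}^{(2l)},x_{2j}^{(2l)})=0$ for $i\neq j$, and since the operator $A$ restricts transitions to $\textrm{supp}\{\nu\}$, the migration contribution vanishes identically. The rate $\langle X^{\epsilon,\sigma}_u,\mu\rangle$ is therefore constant and equal to $\sum_{i=0}^l\bar\xi(x_{2i}^{(2l)})\mu(x_{2i}^{(2l)})$, and substituting $s=t/\sigma$ yields directly
\beq
\PP_{\Gamma^{(2l)}}\bigl(\tau^{\epsilon}>t/\sigma\bigr)=\exp\Bigl(-t\sum_{i=0}^l\bar\xi(x_{2i}^{(2l)})\mu(x_{2i}^{(2l)})\Bigr),
\eeq
which is the claimed identity; no $\epsilon\to 0$ limit is actually required for this elementary step taken in isolation.

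The principal difficulty is therefore not the lemma as stated but its role inside the inductive proof of Theorem \ref{TST_theorem_infinite trait}. When it is applied at a later generation, the physical state reached at the end of the previous phase is only \emph{approximately} $\Gamma^{(2l)}$: it is the relabelled configuration plus an $O(\rho)$ fresh mutant and an $O(\epsilon^{\beta})$ residue on unfixated traits (for various exponents $\beta$ read off from the analyses in Propositions \ref{TSS_3TYPE} and \ref{TSS_4TYPE}). The hard part is to show that on the migration time scale $\ln(1/\epsilon)$ the flow drives this perturbed state into an $\eta$-neighbourhood of $\Gamma^{(2l)}$, which is supplied by Theorem \ref{TST_deterministic} applied locally around the new equilibrium. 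The assumption $1/\sigma\gg\ln(1/\epsilon)$ then ensures this relaxation phase occupies a vanishing fraction of the mutation time scale $1/\sigma$, so the above exact exponential computation applies up to an error tending to $0$ in $\epsilon$, which is what produces the stated limit and justifies the presence of $\lim_{\epsilon\to 0}$ in the statement.
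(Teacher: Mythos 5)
Your argument is correct, and it is in fact more self-contained than what the paper provides: the paper does not prove this lemma at all, but simply asserts that it "can be proved in a very similar way" as Lemma 2(c) of Champagnat's paper \cite{Cha06}, where the analogous statement is established for the stochastic individual-based model and requires nontrivial control of the fluctuations of the population around its equilibrium and of the accumulated mutation intensity. You instead exploit the structure of the limiting object directly: since the generator \eqref{generator_X_epsilon,sigma} consists of two first-order terms plus a jump term, $X^{\epsilon,\sigma}$ is a piecewise-deterministic process whose first jump time has survival function $\exp\bigl(-\sigma\int_0^s\langle X_u^{\epsilon,\sigma},\mu\rangle\,du\bigr)$, and your verification that $\Gamma^{(2l)}$ is an exact fixed point of the drift is sound -- the competition bracket vanishes because (D2) kills all cross terms between even-indexed traits and $\bar\xi$ is the single-type equilibrium, while the migration term vanishes because nearest-neighbour targets $x_{2i\pm 1}^{(2l)}$ lie outside $\mathrm{supp}\,\Gamma^{(2l)}$ and $A$ carries the indicator $1_{\{y\in\mathrm{supp}\{\nu\}\}}$. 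This makes the identity exact for the exactly-initialized process, so the $\lim_{\epsilon\to0}$ is vacuous here, as you note. What Champagnat's route buys (and what your final paragraph correctly identifies as the real content when the lemma is invoked inside the induction) is robustness to starting only \emph{near} the equilibrium: in the theorem's proof the state at the time the lemma is applied carries $O(\rho)$ and $O(\epsilon^{\beta})$ perturbations, and one must show that $\sigma\int_0^{t/\sigma}\langle X_u^{\epsilon,\sigma},\mu\rangle\,du\to t\langle\Gamma^{(2l)},\mu\rangle$, using that the relaxation to the new equilibrium takes time $O(\ln\tfrac1\epsilon)\ll\tfrac1\sigma$. Your proposal flags this cleanly and attributes it to Theorem \ref{TST_deterministic} plus the time-scale separation, which is exactly where the paper also places that burden (in Lemma \ref{Lemma_new config. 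TST} and the proof of Theorem \ref{TST_theorem_infinite trait}).
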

This lemma can be proved in a very similar way as the one for \cite[Lemma 2 (c)]{Cha06}. We will not repeat the details here.

\begin{lem}\label{Lemma_new config. TST}
Assume $X_0^{\epsilon,\sigma}=\Gamma^{(2l)}+\rho\delta_{x_{2k}^{(2l)}+h}$. Then, for any $\varepsilon>0$, there exists a constant $C>0$ such that
\beq
\lim\limits_{\epsilon\to 0}\PP\big(\tau^{\epsilon}>\ln\frac{1}{\epsilon}, \, \sup\limits_{t\in(C\ln\frac{1}{\epsilon}, \tau^{\epsilon})}\| X_t^{\epsilon, \sigma}-\Gamma^{(2l+1)}\|<\varepsilon\big)=1
\eeq
where $\Gamma^{(2l+1)}$ is defined as in Definition \ref{TST_Definition_infinite tree} (i), and $\|\cdot\|$ is the total variation norm.
\end{lem}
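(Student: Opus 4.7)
The plan is to combine a Poisson-type estimate for the rare-mutation event with a deterministic analysis of the migration--competition dynamics on the intervening $\ln(1/\epsilon)$-window, modeled on the proof of Theorem \ref{TST_deterministic}.

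First, I would establish that $\PP(\tau^\epsilon > \ln(1/\epsilon)) \to 1$. Since the total mass $\langle X_t^{\epsilon,\sigma},1\rangle$ is uniformly bounded in $t$ (local regulation still controls the population size, via the moment estimates of \cite{FM04}), the next mutation event arrives with instantaneous intensity at most $\sigma \bar\mu M$ for constants $M,\bar\mu>0$. The hypothesis $1/\sigma \gg \ln(1/\epsilon)$ then gives $\PP(\tau^\epsilon \leq \ln(1/\epsilon)) \leq \sigma \bar\mu M \ln(1/\epsilon) \to 0$ by a Markov bound on the Poisson count.

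Second, on the event $\{\tau^\epsilon > t\}$ the process $X^{\epsilon,\sigma}$ is driven purely by local regulation and migration, hence evolves according to the deterministic system \eqref{tss_X sequence} on the relabeled finite trait set $\{x^{(2l+1)}_0,\ldots,x^{(2l+1)}_{2l+1}\}$ with initial datum $\Gamma^{(2l)}+\rho\,\delta_{x^{(2l)}_{2k}+h}$. I would analyze this system by a cascade of four-step invasion/recovery arguments of the type used for Propositions \ref{TSS_3TYPE}--\ref{TSS_4TYPE}: the mutant, sitting at new-labeling position $2j+1$, experiences nearest-neighbor competition only with $x^{(2l+1)}_{2j}$ and $x^{(2l+1)}_{2j+2}$, and its initial exponential rates are controlled by the fitness functions of (D1). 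The subsequent selective sweeps and recoveries complete on sub-intervals of length $O(\ln(1/\epsilon))$, and Gronwall comparisons between upper and lower auxiliary trajectories, exactly as in \eqref{Invastion_step2_lower bound}--\eqref{Invastion_step2_upper bound}, yield $\|X_t^{\epsilon,\sigma}-\Gamma^{(2l+1)}\|<\varepsilon$ uniformly on $(C\ln(1/\epsilon),\tau^\epsilon)$ for $C$ large enough.

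The main obstacle is that, unlike the clean monomorphic start of Section \ref{section 2.3}, the initial configuration here is a perturbation of a polymorphic state: the ``gap'' traits absent from the support of $\Gamma^{(2l)}$ in fact carry residual mass of order $\epsilon$ inherited from the prior migration equilibration, and the mutant itself has mass $\rho\to 0$. I would argue that this residual $\epsilon$-level mass plays precisely the role of the migration-seeded mass in Step 1 of Proposition \ref{TSS_3TYPE}, so that it drives the next recovery phase on the correct logarithmic time scale; the mutant mass $\rho$ similarly seeds its own exponential growth at rate $f^{(2l+1)}_{2j+1,2j}>0$. The nearest-neighbor restriction (D2) then decouples the cascade into three weakly interacting sub-problems: the upper part of $\Gamma^{(2l)}$ above the insertion point (stable modulo relabeling), a local three-trait invasion at the insertion point analogous to Proposition \ref{TSS_3TYPE}, and the backward-recovery of the traits below it, mirroring the pattern of Theorem \ref{TST_deterministic}(i). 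Since each sub-cascade terminates in time $O(\ln(1/\epsilon))$, a uniform constant $C$ can be taken as the maximum over these finitely many time scales, and the result follows.
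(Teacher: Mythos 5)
Your proposal follows essentially the same route as the paper: first use the time-scale separation $1/\sigma\gg\ln(1/\epsilon)$ to rule out a second mutation on the re-equilibration window, then exploit the nearest-neighbor structure (D2) to decouple the dynamics into the stable upper block, a local Lotka--Volterra competition at the insertion point, and the backward recovery cascade below, each resolving in time $O(\ln(1/\epsilon))$ by the Gronwall comparisons of Section \ref{section 2.5.1}. The only point you leave implicit is the second case of Definition \ref{TST_Definition_infinite tree}(i), where the mutant (mass $\rho\to 0$) is \emph{less} fit than its occupied neighbor and goes extinct rather than invading, yet its transient presence still relabels the neighbor structure and triggers the recovery of $x^{(2l)}_{2j-1}$; the paper treats this case separately, but it is handled by the same two-type analysis you describe.
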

\begin{proof}
From Lemma \ref{Lemma_exp time in TST} and $\frac{1}{\sigma}\gg\ln\frac{1}{\epsilon}$, one concludes that, for any $C>0$,
\[
\lim\limits_{\epsilon\to 0}\PP\big(\tau^{\epsilon}>C\ln\frac{1}{\epsilon}\big)=1.
\]
According to assumption (D1), there will be one and only one ranked position for the new trait $x_{2k}^{(2l)}+h$ among the supporting traits $\{x_{0}^{(2l)}, \ldots, x_{2j}^{(2l)}, \ldots, x_{2l}^{(2l)}\}$ of $\Gamma^{(2l)}$. We will classify two cases according to whether $x_{2k}^{(2l)}+h$ falls on right or left hand side of some $x_{2j}^{(2l)}$ for $0\leq j\le l$.

If there exists some $x_{2j}^{(2l)}$ ($0\leq j\leq l$) such that $x_{2k}^{(2l)}+h$ falls on the right of $x_{2j}^{(2l)}$, one has the local fitness order
\beq
\ldots\prec x_{2j-1}^{(2l)}\prec x_{2j}^{(2l)}\prec x_{2k}^{(2l)}+h\prec x_{2j+1}^{(2l)}\prec\ldots.
\eeq

Since it is unpopulated for both trait sites $x_{2j-1}^{(2l)}$ and $x_{2j+1}^{(2l)}$ at state $\Gamma^{(2l)}$, we consider $\big(x_{2j}^{(2l)},\, x_{2k}^{(2l)}+h\big)$ as an isolated two-type system because of the assumption of nearest-neighbor competition. By analysis in Lemma \ref{lemma_noncoexistence condition}, 
the population density of this two-type system converges to $\big(0,\,\bar\xi(x_{2k}^{(2l)}+h)\big)$ 
in time of order $O\left(\ln\frac{1}{\epsilon}\right)$ as $\epsilon\to 0$. On the right hand side of this 
pair $\big(x_{2j}^{(2l)},\, x_{2k}^{(2l)}+h\big)$, the configuration 
$\sum_{i=j+1}^l\bar\xi(x^{(2l)}_{2i})\delta_{x^{(2l)}_{2i}}$ keeps stable as at previous state 
$\Gamma^{(2l)}$. Whereas on the left hand side of $x_{2j}^{(2l)}$, population density of trait $x_{2j-1}^{(2l)}$,
 due to the decay of its competitive fitter neighbor trait $x_{2j}^{(2l)}$, recovers exponentially fast upto the 
stable equilibrium $\bar\xi(x^{(2l)}_{2j-1})$ of the following logistic equation
\beq
\dot{\xi}_t^{\epsilon}(x_{2j-1}^{(2l)})=(b(x_{2j-1}^{(2l)})-d(x_{2j-1}^{(2l)})-
\al(x_{2j-1}^{(2l)}, x_{2j-1}^{(2l)})\xi_t^{\epsilon}(x_{2j-1}^{(2l)}))\xi_t^{\epsilon}(x_{2j-1}^{(2l)}).
\eeq
Continuing in the same way, the mass occupation flips on the left hand side of the trait site $x_{
2j}^{(2l)}$ such that traits $\{x_{2i-1}^{(2l)}, \,1\leq i\leq j\}$ get re-established while 
$\{x_{2i}^{(2l)}, \,0\leq i\leq j\}$ are eliminated. By 
similar arguments as in the finite trait case (see Section \ref{section 2.5.1}), the 
entire rearrangement process can be completed in time of order $O(\ln\frac{1}{\epsilon})$. 
We obtain the new equilibrium configuration
\begin{equation}
\Gamma^{(2l+1)}=\sum_{i=1}^j\bar\xi(x^{(2l)}_{2i-1})\delta_{x^{(2l)}_{2i-1}}+\bar\xi(x_{2k}^{(2l)}+h)\de_{x_{2k}^{(2l)}+h}+\sum_{i=j+1}^l\bar\xi(x^{(2l)}_{2i})\delta_{x^{(2l)}_{2i}}.
\end{equation}
In the other case, the fitness location of trait $x^{(2l)}_{2k}+h$ falls on the left hand side of some $x^{(2l)}_{2j}$ for $0\leq j\leq l$, that is,
\[
x_{2j-1}^{(2l)}\prec x_{2k}^{(2l)}+h\prec x_{2j}^{(2l)}\prec x_{2j+1}^{(2l)}.
\]
Similarly, consider the sub-populations $\big(x_{2k}^{(2l)}+h,\, x_{2j}^{(2l)}\big)$ as an isolated dimorphic system as in Lemma \ref{lemma_noncoexistence condition}. Since $\rho\to 0$ as $\epsilon\to 0$, we obtain that $\left(\xi_t(x_{2k}^{(2l)}+h), \xi_t(x_{2j}^{(2l)})\right)_{t\geq 0}$, starting with  $\left(\rho, \bar\xi(x^{(2l)}_{2j})\right)$, converges to $\left(0, \bar\xi(x^{(2l)}_{2j})\right)$. Consequently, due to a lack of competition from its nearest fitter neighbor $x_{2k}^{(2l)}+h$, sub-population $x^{(2l)}_{2j-1}$ starts to recover, so does $x^{(2l)}_{2i-1}$ for every $1\leq i<j$. Thus, we obtain the new equilibrium configuration
\[
\Gamma^{(2l+1)}=\sum_{i=1}^j\bar\xi(x^{(2l)}_{2i-1})\delta_{x^{(2l)}_{2i-1}}+\sum_{i=j}^l\bar\xi(x^{(2l)}_{2i})\delta_{x^{(2l)}_{2i}}.
\]
 In conclusion, the new configuration $\Gamma^{(2l+1)}$ is obtained by relabeling the traits as done in 
Definition \ref{TST_Definition_infinite tree} (i).
\end{proof}

Lemma \ref{Lemma_exp time in TST}, shows that the mutation occurs on the time scale 
$O(\frac{1}{\sigma})$. Recall from Section \ref{section 2.3} that the time scale for fixation is 
$O\left(\ln\frac{1}{\epsilon}\right)$ on a finite trait space. Combining them with the time scale 
separation constraint $\frac{1}{\sigma}\gg\ln\frac{1}{\epsilon}$ 
(heuristically introduced by Metz et al \cite{MGMJ96} and mathematically developed by 
Champagnat \cite{Cha06}), the proof of Theorem \ref{TST_theorem_infinite trait} is as follows.

\begin{proof}[Proof of Theorem \ref{TST_theorem_infinite trait}]
For any non-negative $L\in\N$, let $B^{(L)}$ be a measurable subset of $\X^{L}$, such that $B^{(L)}:=\{\texttt{x}^{(L)}=(x^{(L)}_0, \ldots, x^{(L)}_L): x^{(L)}_0\prec\ldots\prec x^{(L)}_L\}$ as in Assumption (D1). 
 Define, for any $L=2l$ even,
\beq\label{Proof:mainTheorem_config.ForEvenNumber}
\bar n(x^{(L)}_i)=
  \begin{cases}
    \bar{\xi}(x^{(L)}_i) & \textrm{for}~ i ~ \textrm{even~ integer} \\
      0                  & \textrm{for}~ i ~ \textrm{odd~ integer},
  \end{cases}
\eeq
and for $L=2l+1$ odd,
\beq\label{Proof:mainTheorem_config.ForOddNumber}
\bar n(x^{(L)}_i)=
  \begin{cases}
     0                    & \textrm{for}~ i ~ \textrm{even ~integer} \\
     \bar{\xi}(x^{(L)}_i) & \textrm{for}~ i ~ \textrm{odd ~integer}.
  \end{cases}
\eeq
Then the support process of $(\Gamma_t)_{t\geq 0}$ in Definition \ref{TST_Definition_infinite tree}, 
denoted by $(Z_t)_{t\geq 0}$, has the following infinitesimal generator
\beq
G \varphi (\texttt{x}^{(L)})= \int_{\R^d}\left(\varphi(\texttt{x}^{(L+1)})-\varphi (\texttt{x}^{(L)})\right)\beta(\texttt{x}^{(L)})\kappa(\texttt{x}^{(L)}, dh),
\eeq
where $\beta(\texttt{x}^{(L)})=\sum\limits_{i=0}^L \bar n(x^{(L)}_i)\mu(x^{(L)}_i)$, and the probability kernel
\beq\label{Proof:mainTheorem_probabKernel}
\kappa(\texttt{x}^{(L)}, dh)=\sum\limits_{k=0}^L \frac{\bar n(x^{(L)}_k)\mu(x^{(L)}_k)}{\beta(\texttt{x}^{(L)})}p(x^{(L)}_k, dh),
\eeq
and $\texttt{x}^{(L+1)}=(x^{(L+1)}_1, \ldots, x^{(L+1)}_{L+1})$ is determined by $\texttt{x}^{(L)}$ and $p(x^{(L)}_k, dh)$ as in Definition \ref{TST_Definition_infinite tree}.

According to Definition \ref{TST_Definition_infinite tree}, let $P_{\texttt{x}^{(L)}}$ be the law of $(Z_t)_{t\geq 0}$ with initial state $\texttt{x}^{(L)}$. 
Denote by $(S_n)_{n\geq 1}$ the sequence of occurrence times of mutations. By applying the 
strong Markov property at $S_1$, we obtain
\beq\label{Proof:mainTheorem_Recursion}
\bea
  &P_{\texttt{x}^{(L)}}
  \left( S_n< t< S_{n+1}, Z_t\in B^{(L+n)}\right)\\
  &=\int_0^t \beta(\texttt{x}^{(L)}) \exp^{-s\beta(\texttt{x}^{(L)})} \int_{h\in\R^d}P_{\texttt{x}^{(L+1)}}\left(S_{n-1}< t-s< S_n, Z_{t-s}\in B^{(L+n)}\right)\kappa(\texttt{x}^{(L)}, dh)ds.
\eea
\eeq
In particular,
\beq\label{Proof:mainTheorem_RecursionFirstStep}
P_{\texttt{x}^{(L)}}\left( 0\leq t< S_1, Z_t\in B^{(L)}\right)=1_{\{\texttt{x}^{(L)}\in B^{(L)}\}} \exp\left(-t\beta(\texttt{x}^{(L)})\right).
\eeq
The idea of our proof for the theorem is to show that the same relation as above holds for the  
rescaled processes $(X_{t/\sigma}^{\epsilon,\sigma})_{t\geq 0}$ as taking $\epsilon\to 0$ when we replace $Z_t$ by $Supp(X_{t/\sigma}^{\epsilon,\sigma})$ and replace $S_n$ by the $n$-th jump time $\tau^{\epsilon}_n$ of $(X_{t/\sigma}^{\epsilon,\sigma})_{t\geq 0}$.     

Let $\PP_{\texttt{x}^{(L)}}$ be the law of $(X_{t/\sigma}^{\epsilon,\sigma})_{t\geq 0}$ with initial state's support $\texttt{x}^{(L)}$. Consider the quantity
\beq
\PP_{\texttt{x}^{(L)}}
  \left(\tau^{\epsilon}_n< t<\tau^{\epsilon}_{n+1}, ~Supp(X_{t/\sigma}^{\epsilon,\sigma})\in B^{(L+n)}\right).
\eeq
For $n=0$, it is implied from Lemma \ref{Lemma_exp time in TST} that
\beq\label{Proof:mainTheorem_RecursionAsymptoticFirstStep}
\lim\limits_{\epsilon\to 0}\PP_{\texttt{x}^{(L)}}
  \left(0\leq t<\tau^{\epsilon}_{1}, ~Supp(X_{t/\sigma}^{\epsilon,\sigma})\in B^{(L)}\right)=1_{\{\texttt{x}^{(L)}\in B^{(L)}\}} \exp\left(-t\beta(\texttt{x}^{(L)})\right).
\eeq

For $n\geq 1$, by the strong Markov property at $\tau^{\epsilon}_1$, we obtain
\beq\label{Proof:mainTheorem_RecursionDeduction}
\bea
  &\PP_{\texttt{x}^{(L)}}
  \left(\tau^{\epsilon}_n< t<\tau^{\epsilon}_{n+1}, ~Supp(X_{t/\sigma}^{\epsilon,\sigma})\in B^{(L+n)}\right)\\
  &=\PP_{\texttt{x}^{(L)}}
  \left(\tau^{\epsilon}_1 < t\right) \int_{h\in\R^d}\PP_{\texttt{x}^{(L+1)}}\left(\tau^{\epsilon}_{n-1}< t-\tau^{\epsilon}_1< \tau^{\epsilon}_n, ~Supp(X_{\tfrac{t-\tau^{\epsilon}_1}{\sigma}}^{\epsilon,\sigma})\in B^{(L+n)}\right)\kappa^\epsilon(\texttt{x}^{(L)}, dh)
\eea
\eeq
where $\kappa^\epsilon(\texttt{x}^{(L)}, dh)=\sum\limits_{k=0}^L \tfrac{\xi^{\epsilon}_{\tau^{\epsilon}_1}(x^{(L)}_k)\mu(x^{(L)}_k)p(x^{(L)}_k, dh)}{\sum\limits_{k=0}^L \xi^{\epsilon}_{\tau^{\epsilon}_1}(x^{(L)}_k)\mu(x^{(L)}_k)}$ converges to $\sum\limits_{k=0}^L \tfrac{\bar n(x^{(L)}_k)\mu(x^{(L)}_k)p(x^{(L)}_k, dh)}{\sum\limits_{k=0}^L \bar n(x^{(L)}_k)\mu(x^{(L)}_k)}$ as $\epsilon\to 0$ due to Theorem \ref{TST_deterministic} and the form of $\bar n(\cdot)$ given in Eq.\eqref{Proof:mainTheorem_config.ForEvenNumber} and \eqref{Proof:mainTheorem_config.ForOddNumber}.

Substituting the terms on the RHS of Eq.\eqref{Proof:mainTheorem_RecursionDeduction} by their limits when taking $\epsilon\to 0$, combining with \eqref{Proof:mainTheorem_RecursionAsymptoticFirstStep} and \eqref{Proof:mainTheorem_probabKernel}, we obtain
\beq\label{Proof:mainTheorem_RecursionAsymptotic}
\bea
  &\lim\limits_{\epsilon\to 0}\PP_{\texttt{x}^{(L)}}
  \left(\tau^{\epsilon}_n< t<\tau^{\epsilon}_{n+1}, Supp(X_{t/\sigma}^{\epsilon,\sigma})\in B^{(L+n)}\right)\\
  &=\lim\limits_{\epsilon\to 0}\int_0^t \beta(\texttt{x}^{(L)})\exp\left(-s\beta(\texttt{x}^{(L)})\right) \int_{h\in\R^d} \sum\limits_{k=0}^L \tfrac{\bar  n(x^{(L)}_k)\mu(x^{(L)}_k)p(x^{(L)}_k, dh)}{\sum\limits_{k=0}^L \bar n(x^{(L)}_k)\mu(x^{(L)}_k)}\\
     &\qquad\quad \cdot \PP_{\texttt{x}^{(L+1)}}\left(\tau^{\epsilon}_{n-1}< t-s< \tau^{\epsilon}_n, ~Supp(X_{\tfrac{t-s}{\sigma}}^{\epsilon,\sigma})\in B^{(L+n)}\right)ds\\
  &=\int_0^t \beta(\texttt{x}^{(L)})\exp\left(-s\beta(\texttt{x}^{(L)})\right) \\
     &\qquad\quad\int_{h\in\R^d}\lim\limits_{\epsilon\to 0}\PP_{\texttt{x}^{(L+1)}}\left(\tau^{\epsilon}_{n-1}< t-s< \tau^{\epsilon}_n, ~Supp(X_{\tfrac{t-s}{\sigma}}^{\epsilon,\sigma})\in B^{(L+n)}\right)\kappa(\texttt{x}^{(L)}, dh)ds.
\eea
\eeq

By \eqref{Proof:mainTheorem_RecursionFirstStep} and \eqref{Proof:mainTheorem_RecursionAsymptoticFirstStep}, we conclude that
\beq
\lim\limits_{\epsilon\to 0}\PP_{\texttt{x}^{(L)}}
    \left(0\leq t<\tau^{\epsilon}_{1}, Supp(X_{t/\sigma}^{\epsilon,\sigma})\in B^{(L)}\right)
  =P_{\texttt{x}^{(L)}}\left( 0\leq t< S_1, Z_t\in B^{(L)}\right),
\eeq
and moreover, by \eqref{Proof:mainTheorem_Recursion} and \eqref{Proof:mainTheorem_RecursionAsymptotic}, we conclude that
\beq
\lim\limits_{\epsilon\to 0}\PP_{\texttt{x}^{(L)}}
    \left(\tau^{\epsilon}_n< t<\tau^{\epsilon}_{n+1}, Supp(X_{t/\sigma}^{\epsilon,\sigma})\in B^{(L+n)}\right)
 =P_{\texttt{x}^{(L)}}
    \left( S_n< t< S_{n+1}, Z_t\in B^{(L+n)}\right).
\eeq

Thus, by conditional probability and Lemma \ref{Lemma_new config. TST}, for any $\varepsilon >0$
\beq
\bea
\lim\limits_{\epsilon\to 0}
    &\PP_{\texttt{x}^{(L)}}
         \left(\tau^{\epsilon}_n< t<\tau^{\epsilon}_{n+1}, ~Supp(X_{t/\sigma}^{\epsilon,\sigma})\in B^{(L+n)}, ~\| X_{t/\sigma}^{\epsilon, \sigma}-\Gamma^{(L+n)}\|<\varepsilon\right)\\
    &=\lim\limits_{\epsilon\to 0}\PP_{\texttt{x}^{(L)}}
         \left(\tau^{\epsilon}_n< t<\tau^{\epsilon}_{n+1}, ~Supp(X_{t/\sigma}^{\epsilon,\sigma})\in B^{(L+n)}\right)\PP_{\texttt{x}^{(L+n)}}\left(\| X_{t/\sigma}^{\epsilon, \sigma}-\Gamma^{(L+n)}\|<\varepsilon\right)\\
    &=P_{\texttt{x}^{(L)}}\left( S_n< t< S_{n+1}, Z_t\in B^{(L+n)}\right).
\eea
\eeq

Therefore, Theorem \ref{TST_theorem_infinite trait} is proved in the sense of one-dimensional time marginal distribution. It can be generalized to convergence in finite dimensional distribution similarly as in \cite{Cha06}.
%
\end{proof}


\section{Simulation algorithm}\label{section 2.6}
The pathwise construction of the TST process defined in Definition \ref{TST_Definition_infinite tree} leads to the following numerical algorithm for simulation of the TST process.

\begin{itemize}
\item [{\it Step 0.}] Specify the  initial condition: $\Gamma_0=\Gamma^{(0)}=\bar\xi(x_0)\delta_{x_0}$.
\item [{\it Step 1.}] Simulate $\tau_1$ exponential distributed with parameter $\bar\xi(x_0)\mu(x_0)$. Sample a new trait $(x_0+h)$ with density $p(x_0, dh)$. If $f(x_0+h,x_0)>0$, relabel $x^{(1)}_0:=x_0,\,x^{(1)}_1:=x_0+h$. Otherwise, relabel $x^{(1)}_0:=x_0+h,\,x^{(1)}_1:=x_0$. \\
    Set $\Gamma^{(1)}=\bar\xi(x^{(1)}_1)\delta_{x^{(1)}_1}$, and $\Gamma_t=\Gamma^{(0)}$ for $t\in[0, \tau_1)$.
\item [{\it Step 2.}] Simulate $\tau_2$ exponential distributed with parameter $\bar\xi(x^{(1)}_1)\mu(x^{(1)}_1)$. \\
    Set $\Gamma_t=\Gamma^{(1)}$ for $t\in[\tau_1, \tau_1+\tau_2)$. \\
    Sample a new trait $\big(x^{(1)}_1+h\big)$ with density $p(x^{(1)}_1,dh)$. \\
    Choose one from the following to carry out:
    \begin{itemize}
          \item if $f(x^{(1)}_1+h,x^{(1)}_1)>0$, relabel $x^{(2)}_0:=x^{(1)}_0,\,x^{(2)}_1:=x^{(1)}_1,\,x^{(2)}_2:=x^{(1)}_1+h$;
          \item if $f(x^{(1)}_1+h,x^{(1)}_1)<0,\, f(x^{(1)}_1+h,x^{(1)}_0)>0$, relabel $x^{(2)}_0:=x^{(1)}_0,\,x^{(2)}_1:=x^{(1)}_1+h,\,x^{(2)}_2:=x^{(1)}_1$;
          \item if $f(x^{(1)}_1+h,x^{(1)}_0)<0$, relabel
              $x^{(2)}_0:=x^{(1)}_1+h,\,x^{(2)}_1:=x^{(1)}_0,\,x^{(2)}_2:=x^{(1)}_1$.
    \end{itemize}
    Set $\Gamma^{(2)}=\bar\xi(x^{(2)}_0)\delta_{x^{(2)}_0}+\bar\xi(x^{(2)}_2)\delta_{x^{(2)}_2}$.
\item [{\it Step 2l+1.}] Generate $\Gamma^{(2l+1)}$ from $\Gamma^{(2l)}=\sum\limits_{i=0}^l\bar\xi(x_{2i}^{(2l)})\delta_{x_{2i}^{(2l)}}$ for $l=1,2,\cdots$.\\
    Simulate $\tau_{2l+1}$ exponential distributed with parameter $\sum\limits_{i=0}^l \bar\xi(x_{2i}^{(2l)})\mu(x_{2i}^{(2l)})$.\\
    Set $\Gamma_t=\Gamma^{(2l)}$ for $t\in\big[\sum\limits_{i=1}^{2l}\tau_{i},\sum\limits_{i=1}^{2l+1}\tau_{i}\big)$.
    Select one trait $x^{(2l)}_{2k}$, for any $0\leq k\leq l$, to mutate with probability $\frac{\bar\xi(x_{2k}^{(2l)})\mu(x_{2k}^{(2l)})}{\sum_{i=0}^l\bar\xi(x_{2i}^{(2l)})\mu(x_{2i}^{(2l)})}$. Sample a new trait $\big(x^{(2l)}_{2k}+h\big)$ with probability density $p(x^{(2l)}_{2k}, dh)$. Choose one from the following three cases to carry out:
    \begin{itemize}
    \item if $f(x^{(2l)}_{2k}+h, x^{(2l)}_{2l})>0$, relabel $x^{(2l+1)}_{2l+1}:=x^{(2l)}_{2k}+h,\,x^{(2l+1)}_{i}:=x^{(2l)}_{i}$ for $0\leq i\leq 2l$;
    \item if $f(x^{(2l)}_{2k}+h, x^{(2l)}_0)<0$, relabel
         $x^{(2l+1)}_{i}:=x^{(2l)}_{i-1}$ for $1\leq i\leq 2l+1$, and $x^{(2l+1)}_0:=x^{(2l)}_{2k}+h$;
    \item otherwise, there exists $0\leq j<l$ s.t. $f(x^{(2l)}_{2k}+h,x^{(2l)}_{2i})<0$ for $j<i\leq l$, and $f(x^{(2l)}_{2k}+h,x^{(2l)}_{2j})>0$. Furthermore,
        \begin{itemize}
        \item if $f(x^{(2l)}_{2k}+h,x^{(2l)}_{2j+1})<0$, relabel $x_i^{(2l+1)}:=x_i^{(2l)}$ for $0\leq i\leq 2j\,$,  $x_i^{(2l+1)}:= x_{i-1}^{(2l)}$ for $2j+2\leq i\leq 2l+1$, and $x_{2j+1}^{(2l+1)}:=x_{2k}^{(2l)}+h$;
        \item if $f(x^{(2l)}_{2k}+h,x^{(2l)}_{2j+1})>0$, relabel
            $x_i^{(2l+1)}:=x_i^{(2l)}$ for $0\leq i\leq 2j+1\,$,
            $x_i^{(2l+1)}:= x_{i-1}^{(2l)}$ for $2j+3\leq i\leq 2l+1$, and
             $x_{2j+2}^{(2l+1)}:=x_{2k}^{(2l)}+h$.
        \end{itemize}
    \end{itemize}
    Set $\Gamma^{(2l+1)}=\sum\limits_{i=1}^{l+1}\bar\xi(x_{2i-1}^{(2l+1)})\delta_{x_{2i-1}^{(2l+1)}}$.
\item [{\it Step 2l+2.}] To generate $\Gamma^{(2l+2)}$
    from $\Gamma^{(2l+1)}=\sum\limits_{i=1}^{l+1}\bar\xi(x_{2i-1}^{(2l+1)})\delta_{x_{2i-1}^{(2l+1)}}$ for $l=1,2,\cdots$.\\
    This can be done as similar as the induction from $\Gamma^{(2l)}$ to $\Gamma^{(2l+1)}$. So forth.
\end{itemize}

\appendix

\section{Stability of a Lotka-Volterra system}
Consider a Lotka-Volterra system $\left(n(x), n(y)\right)$ satisfying the following equations.

\beq\label{Appendix LV2 system}
\left\{
  \begin{array}{ll}
    \dot{n}_t(x)=\left(b(x)-d(x)-\alpha(x,x)n_t(x)-\alpha(x,y)n_t(y)\right)n_t(x) \\
     \dot{n}_t(y)=\left(b(y)-d(y)-\alpha(y,x)n_t(x)-\alpha(y,y)n_t(y)\right)n_t(y).
  \end{array}
\right.
\eeq

Suppose that $n_0(x), n_0(y)>0$ and $f(y,x):=b(y)-d(y)-\al(y,x)\bar{n}(x)>0$,\, $\bar n(x)=\frac{b(x)-d(x)}{\al(x,x)}$, and its symmetric form $f(x,y)<0$. Then we conclude that $(0, \bar n(y))$ is the only stable point.

In fact, there are four fixed points of above system, namely, $(0,0)$, \,$(\bar n(x),0)$, \,$(0, \bar n(y))$, and $(n^*(x), n^*(y))$, where $(n^*(x), n^*(y))$ is such that

\beq\nonumber
\left\{
  \begin{array}{ll}
    b(x)-d(x)-\alpha(x,x)n_t(x)-\alpha(x,y)n_t(y)=0 \\
    b(y)-d(y)-\alpha(y,x)n_t(x)-\alpha(y,y)n_t(y)=0.
  \end{array}
\right.
\eeq

By simple calculation, we obtain that
\beq\nonumber
\left\{
  \begin{array}{ll}
    n^*(x)=\frac{\al(y,y)f(x,y)}{\al(x,x)\al(y,y)-\al(x,y)\al(y,x)} \\
    n^*(x)=\frac{\al(x,x)f(y,x)}{\al(x,x)\al(y,y)-\al(x,y)\al(y,x)}.
  \end{array}
\right.
\eeq
To make sense of the solution as a population density (which must be non-negative), one needs $f(x,y)\cdot f(y,x)>0$. It contradicts the assumption $f(x,y)<0, f(y,x)>0$. We thus exclude the solution $(n^*(x), n^*(y))$.

The Jacobian matrix for the system \eqref{Appendix LV2 system} at point $(0,0)$ is
\beq\nonumber
\left(
  \begin{array}{cc}
    b(x)-d(x) & 0 \\
    0 & b(y)-d(y) \\
  \end{array}
\right).
\eeq
Obviously its eigenvalues are both positive. Thus $(0,0)$ is unstable.

The Jacobian matrix at point $(\bar n(x), 0)$ is
\beq\nonumber
\bea
\left(
  \begin{array}{cc}
    -\left(b(x)-d(x)\right) & -\al(x,y)\bar n(x) \\
    0 & b(y)-d(y)-\al(y,x)\bar n(x) \\
  \end{array}
\right)\\
=\left(
   \begin{array}{cc}
      -\left(b(x)-d(x)\right) & -\al(x,y)\bar n(x) \\
      0 & f(y,x) \\
   \end{array}
 \right).
\eea
\eeq
Since one of its eigenvalue $-\left(b(x)-d(x)\right)$ is negative whereas the other one is $f(y,x)>0$, the equilibrium $(\bar n(x), 0)$ is unstable.

The Jacobian matrix of system \eqref{Appendix LV2 system} at point $(0, \bar n(y))$ is
\beq\nonumber
\left(
  \begin{array}{cc}
    b(x)-d(x)-\al(x,y)\bar n(y) & 0 \\
    -\al(y,x)\bar n(y) & -\left(b(y)-d(y)\right) \\
  \end{array}
\right)\\
=\left(
   \begin{array}{cc}
     f(x,y) & 0 \\
     -\al(y,x)\bar n(y) & -\left(b(y)-d(y)\right) \\
   \end{array}
 \right),
\eeq
whose eigenvalues are both negative because of the condition $f(x,y)<0$.
Thus $(0, \bar n(y))$ is the only stable equilibrium of the system \eqref{Appendix LV2 system}.

\bibliographystyle{abbrv}
\bibliography{bib}

\end{document}